\DeclareMathOperator{\Jac}{{\rm Jac}}
\newcommand{\AbsH}{|A|}
\title[Fractional Willmore-type energy]{A fractional Willmore-type energy functional -- subcritical observations}
\author[S. Blatt]{Simon Blatt}
\author[G. Giacomin]{Giovanni Giacomin}
\author[J. Scheuer]{Julian Scheuer}
\author[A. Schikorra]{Armin Schikorra}
\address[Simon Blatt]{Paris Lodron Universit\"at Salzburg, Hellbrunner Strasse 34, 5020 Salzburg, Austria}
\email{simon.blatt@sbg.ac.at}
\address[Giovanni Giacomin]
{Department of Mathematics and Statistics, University of Western Australia, 35 Stirling Highway, WA6009 Crawley,
Australia.} \email{giovanni.giacomin@research.uwa.edu.au}
\address[Armin Schikorra]{Department of Mathematics,
University of Pittsburgh,
301 Thackeray Hall,
Pittsburgh, PA 15260, USA}
\email{armin@pitt.edu}
\address[Julian Scheuer]{Goethe-Universität, Institut f\"ur Mathematik, Robert-Mayer-Str. 10, 60325
Frankfurt, Germany}
\email{scheuer@math.uni-frankfurt.de}
\newcommand{\N}{{\mathbb N}}
\renewcommand{\S}{{\mathbb S}}
\newtheorem{theorem}{Theorem}
\newtheorem{lemma}[theorem]{Lemma}
\newtheorem{corollary}[theorem]{Corollary}
\newtheorem{proposition}[theorem]{Proposition}
\theoremstyle{definition}
\theoremstyle{remark}
\newtheorem{remark}[theorem]{Remark}
\newcommand\diam{{\rm diam\,}}
\newcommand\dist{{\rm dist\,}}
\newcommand{\R}{\mathbb{R}}
\newcommand{\Z}{\mathbb{Z}}
\newcommand{\brac}[1]{\left (#1 \right )}
\newcommand{\abs}[1]{\left\lvert #1 \right \rvert}
\newcommand{\barint}{
\rule[.036in]{.12in}{.009in}\kern-.16in \displaystyle\int }
\newcommand{\barcal}{\text{$ \rule[.036in]{.11in}{.007in}\kern-.128in\int $}}
\def\mvint_#1{\mathchoice
          {\mathop{\vrule width 6pt height 3 pt depth -2.5pt
                  \kern -8pt \intop}\nolimits_{\kern -3pt #1}}%
          {\mathop{\vrule width 5pt height 3 pt depth -2.6pt
                  \kern -6pt \intop}\nolimits_{#1}}%
          {\mathop{\vrule width 5pt height 3 pt depth -2.6pt
                  \kern -6pt \intop}\nolimits_{#1}}%
          {\mathop{\vrule width 5pt height 3 pt depth -2.6pt
                  \kern -6pt \intop}\nolimits_{#1}}}
\numberwithin{theorem}{section} \numberwithin{equation}{section}
\newcommand{\aleq}{\lesssim}
\newcommand{\ageq}{\succsim}
\newcommand{\aeq}{\approx}
\def\avint{\,\ThisStyle{\ensurestackMath{%
			\stackinset{c}{.2\LMpt}{c}{.5\LMpt}{\SavedStyle-}{\SavedStyle\phantom{\int}}}%
		\setbox0=\hbox{$\SavedStyle\int\,$}\kern-\wd0}\int}
\let\latexchi\chi
\renewcommand\chi{\@ifnextchar_\sub@chi\latexchi}
\newcommand{\sub@chi}[2]{
  \@ifnextchar^{\subsup@chi{#2}}{\latexchi^{}_{#2}}%
}
\newcommand{\subsup@chi}[3]{
  \latexchi_{#1}^{#3}%
}
\newcommand{\eps}{\varepsilon}
\begin{document}

\begin{abstract}
We investigate surfaces with bounded $L^p$-norm of the fractional mean curvature, a quantity we shall refer to as fractional Willmore-type functional. In the subcritical case and under convexity assumptions we show how this Willmore-functional controls local parametrization, and conclude as consequences lower Ahlfors-regularity, a weak Michael-Simon type inequality, and an application to stability.
\end{abstract}

\date{\today}
\maketitle
\tableofcontents

\section{Introduction}

Let $\Omega \subset \R^N$ be a bounded open set and consider (under appropriate smoothness assumptions) $\Sigma := \partial \Omega$ as a $d$-dimensional hypersurface, $N=d+1$.
For $s \in (0,1)$, Caffarelli-Roquejoffre-Savin initiated in \cite{CRS10} the study of the so-called fractional perimeter, and obtained as the first variation thereof the notion of fractional mean curvature $H_{\Sigma,s}$ given by the formula
\[
 H_{\Sigma,s} (x) := c_s \int_{\Sigma} \frac{\langle x-y, n(y) \rangle}{|x-y|^{d+1+s}}\, d\mathcal{H}^d(y),
\]
cf. \cite{CCC20}.
Here $n: \Sigma \to \S^{d} \subset \R^N$ is the outwards pointing unit normal on $\Sigma = \partial \Omega$ and $c_s$ is a constant.

This integral does not need to be considered in the principal value sense, observing that $\frac{x-y}{|x-y|}$ is approximately tangent to $\Sigma$ we see that $\lim_{y \to x} \langle \frac{x-y}{|x-y|}, n(y)\rangle =0$, and (under sufficient smoothness assumptions on $\partial \Omega$) we see that $H_{\Sigma,s}(x)$ is well-defined for any $x \in \Sigma$.

Also let us remark that by suitably choosing $c_s$ one can see that $\lim_{s \to 1} H_{\Sigma,s}(x) = H_{\Sigma}(x)$ where $H$ is the usual (scalar) mean curvature of $\Sigma$.

The definition of fractional perimeter and fractional mean curvature has spawned a new area of nonlocal geometric questions and problems, for e.g. fractional minimal surfaces (i.e. assuming $H_s \equiv 0$), fractional constant-mean-curvature surfaces (assuming $H_s \equiv const$). See, among many other works, e.g. \cite{CV11,CFSW18,DdPW18}.

This article is an invitation to consider a fractional version of the Willmore energy, i.e.
\[
 \mathcal{W}_{s,p}(\Sigma) := \int_{\Sigma} |H_{\Sigma,s}(x)|^p\, d\mathcal{H}^d(x).
\]
For $p = 2$ and $s \to 1$ (again, under a suitable choice of the normalizing constant $c_s$) we see that $\lim_{s \to 1} = \mathcal{W}(\Sigma)$, the classical Willmore energy that is of great interest in Geometric Analysis, involving the resolution of the Willmore conjecture not too long ago, \cite{MN14}, see also \cite{AFN16}.
Critical points of the Willmore energy are commonly referred to as Willmore surfaces, and Willmore surfaces clearly form a larger class than minimal surfaces.

We point out that a very related curvature energy to $\mathcal{W}_{s,p}$ has been studied for more than two decades in applied mathematics and topology: the class of integral tangent-point energies, that were first considered for modelling purposes, \cite{buck-orloff,GM99}. See \cite{SvdM12,SvdM13} for discussions under very weak assumptions of the underlying sets $\Sigma$, Sobolev-space characterizations \cite{BEnergyspaces13}, regularity for minimizers in the subcritical regime, \cite{BR2015}, and in the scaling-invariant case, \cite{BRSV21}.

The relation between fractional Willmore-energy and tangent-point energy becomes obvious once we write both formulas next to each other: the fractional Willmore energy is given by
\[
 \mathcal{W}_{s,p}(\Sigma) := |c_s|^p \int_{\Sigma} \abs{\int_{\Sigma} \frac{\langle x-y, n(y) \rangle}{|x-y|^{d+1+s}}\, d\mathcal{H}^d(y)}^p\, d\mathcal{H}^d(x),
\]
while the tangent-point energy is denoted as 
\[
 \mathcal{T}_{p,q}(\Sigma) := |c_s|^p \int_{\Sigma} \int_{\Sigma} \frac{\abs{\langle x-y, n(y) \rangle}^p}{|x-y|^{q-p}}\, d\mathcal{H}^d(y)\, d\mathcal{H}^d(x),
\]
for some parameters $q>p>0$.
The name \emph{tangent-point energy} derives from the fact that
\[
 R(x,y) := \frac{|x-y|^2}{|\langle n(y), x-y\rangle|}
\]
is the radius of the (smallest) sphere tangent to $y+ T_y \Sigma$ passing through $x$, so the tangent-point energy is closely related to an integrated norm of $1/R(x,y)$.

Tangent-point and related curvature energies\footnote{Those have been defined first for curves, often called knot energies, to capture topology of knots. We refer the interested reader e.g. to O'hara energies \cite{OH91,OH92,OH94}, a special case of which is the M\"obius energy \cite{FHW94}. See also \cite{BRS16,BRS19} and \cite{JunSurface1,JunSurface2}. Another popular class are the Menger curvature energies, \cite{SvdM13Menger}. } have been around for a long time in the topological-analytical community, and by the (formal) similarity of the corresponding nonlocal curvature quantities we hope that the two fields can positively influence each other.

Our analysis below will be restricted to two major simplifications: firstly we will \emph{de facto} mostly consider convex surfaces. 
Also, we focus on the subcritical case where $p > \frac{d}{s}$ -- with the hope that the arguments we present here are still useful for e.g. minimal and constant mean curvature. Observe that we have the scaling property
\begin{equation}\label{eq:willmorescaling}
 \mathcal{W}_{s,p}(\lambda \Sigma) = \lambda^{d-sp} \mathcal{W}_{s,p}(\Sigma) \quad \forall \lambda > 0.
\end{equation}
Thus, $p=\frac{d}{s}$ is the scaling invariant case, a case which requires more profound tools from Geometry and/or Harmonic Analysis. 

Our first result is the following uniform Lipschitz graph control.
\begin{theorem}\label{th:uniformbilipschitzparam}
Let $p>\frac{d}{s}$ and $s\in (0,1)$. Then for any $\Lambda > 0$ there exists $R > 0$ such that the following holds:

Let $\Sigma = \Sigma^d = \partial \Omega$ be the smooth boundary of a compact convex body in $\mathbb{R}^{d+1}$, such that
\[
 \mathcal{W}_{s,p}(\Sigma) \leq \Lambda.
\]
Then for any $x_0 \in \Sigma$ there exists a rotation $P \in SO(N)$ and a smooth map
\[
 f: B(0,R) \to \R
\]
with 
\[
 [\nabla f]_{C^{s-d/p}(B(0,R))} \leq C(\Lambda,s,p,d),
\]
so that $x_0+P(x,f(x))$ is a local parametrization of $\Sigma$, that is
\[
 \Phi (x) := x_0 + P(x,f(x))
\]
is a diffeomorphism as a map $B(0,R) \to \Phi(B(0,R))\subset \Sigma.$
\end{theorem}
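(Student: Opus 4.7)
The plan is to exploit convexity in two ways: to guarantee a local graph representation of $\Sigma$ over its tangent plane, and to give the integrand in $H_{\Sigma,s}$ a definite sign, so that the $L^p$-bound on $H_{\Sigma,s}$ can be transferred to a Gagliardo-type estimate on the graph function. After a rigid motion we may assume $x_0 = 0$ and that the outward unit normal at $x_0$ equals $e_{d+1}$; convexity of $\Omega$ then produces a maximal open convex set $D \subset \R^d$ containing $0$ such that, in this chart, $\Sigma$ is the graph of a smooth concave function $f: D \to \R$ with $f(0) = 0$, $\nabla f(0) = 0$, and $f \le 0$. It therefore suffices to find $R = R(\Lambda,s,p,d)$ with $B(0,R) \subset D$ and $[\nabla f]_{C^{s-d/p}(B(0,R))} \le C(\Lambda,s,p,d)$.

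\emph{Lower bound on $|H_{\Sigma,s}|$ by a concavity defect.} Convexity of $\Omega$ gives $\langle x-y, n(y)\rangle \le 0$ for all $x,y \in \Sigma$, so the integrand defining $H_{\Sigma,s}$ has constant sign. In the graph chart, for $x = (v,f(v))$ and $y = (u,f(u))$ with $u,v \in D$, a direct computation combined with the identity $f(u) + (v-u)\cdot \nabla f(u) - f(v) = \int_0^1 (v-u)\cdot [\nabla f(u)-\nabla f(u+t(v-u))]\,dt$ gives
\begin{equation*}
|\langle x-y, n(y)\rangle| = \frac{1}{\sqrt{1+|\nabla f(u)|^2}}\int_0^1 (v-u)\cdot [\nabla f(u)-\nabla f(u+t(v-u))]\,dt \ge 0,
\end{equation*}
where nonnegativity follows from concavity. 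Restricting the integration defining $H_{\Sigma,s}(x)$ to the graph neighborhood then yields a pointwise lower bound for $|H_{\Sigma,s}(x)|$ by a Riesz-potential-type integral of this concavity defect.

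\emph{Gagliardo seminorm, embedding, and uniform radius.} Taking the $L^p$-norm of this lower bound over $\Sigma$, applying Minkowski in $t \in (0,1)$, changing variables $w = u + t(v-u)$, and comparing the ambient distance $|(v-u, f(v)-f(u))|$ with the flat distance $|v-u|$, the Willmore bound $\mathcal{W}_{s,p}(\Sigma) \le \Lambda$ translates into a Gagliardo-type estimate
\begin{equation*}
\iint \frac{|\nabla f(u)-\nabla f(w)|^p}{|u-w|^{d+sp}}\,du\,dw \le C(\Lambda,s,p,d)
\end{equation*}
on any ball where $|\nabla f|$ is a priori bounded. Since $p > d/s$, the subcritical fractional Sobolev embedding $W^{s,p} \hookrightarrow C^{s-d/p}$ together with $\nabla f(0) = 0$ yields $|\nabla f(u)| \le C(\Lambda,s,p,d)\,|u|^{s-d/p}$. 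Choosing $R$ so that $C R^{s-d/p} \le 1/2$, a standard continuity/bootstrap argument propagates both estimates up to $B(0,R)$, and convexity excludes the appearance of a vertical tangent wherever $|\nabla f| \le 1/2$, hence $B(0,R) \subset D$.

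\emph{Main obstacle.} The delicate step is the extraction of the Gagliardo estimate: one must justify the comparison between the ambient $(d+1)$-dimensional and the flat $d$-dimensional distances uniformly on the a priori unknown graph domain, handle the factor $(1+|\nabla f|^2)^{-1/2}$ without losing constants that could depend on $\diam(D)$, and run the bootstrap in tandem with the Sobolev embedding. Convexity is essential throughout, both to give the concavity defect a sign (turning the surface Gagliardo numerator into something non-negative) and to rule out pathological shapes of the graph domain $D$.
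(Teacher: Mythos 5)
Your overall outline matches the paper's proof quite closely: pass to a local graph $f$ near $x_0$, use convexity to give $\langle x-y,n(y)\rangle$ a sign so that $\mathcal{W}_{s,p}$ becomes $\mathcal{B}_{s,p}$, identify $|\langle x-y,n(y)\rangle|$ with the concavity defect $|f(x)-f(y)-\nabla f(y)\cdot(x-y)|$ up to a Jacobian factor, derive $C^{s-d/p}$ control of $\nabla f$, and close with a continuity/bootstrap argument (these are the roles of Lemma~2.4, Lemma~2.5 and Lemma~2.2 in the paper). The key difference, and the place where the proposal breaks down, is the step labelled ``Gagliardo seminorm, embedding, and uniform radius.''

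What the Willmore bound actually produces in the chart, after accounting for the Jacobian and the normal, is of the form
\[
\int_{B}\left(\int_{B}\frac{|f(x)-f(y)-\nabla f(y)\cdot(x-y)|}{|x-y|^{d+1+s}}\,dy\right)^{p}dx \;\lesssim\; \bigl(1+\|\nabla f\|_{L^\infty}\bigr)^{p(d+s+1)}\,\mathcal{B}_{s,p}(\Sigma),
\]
i.e.\ an $L^{p}_{x}$ bound on an $L^{1}_{y}$-type Riesz potential of the second difference. You then claim that Minkowski in $t$ together with the substitution $w=u+t(v-u)$ upgrades this to the Gagliardo estimate
\[
\iint\frac{|\nabla f(u)-\nabla f(w)|^{p}}{|u-w|^{d+sp}}\,du\,dw \;\leq\; C(\Lambda,s,p,d).
\]
This passage does not work. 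Minkowski's integral inequality gives $\|\int g(\cdot,t)\,dt\|_{L^p}\leq\int\|g(\cdot,t)\|_{L^p}\,dt$, i.e.\ an \emph{upper} bound on the quantity you already control, not the lower bound you would need. More structurally, you are trying to pass from a bound of the form $\|\int_{w} g(u,w)\,dw\|_{L^p_u}$ to a bound on $\|g\|_{L^p_{u,w}}$; no such implication holds even for nonnegative kernels (take $g(u,w)=\phi(w)$ independent of $u$ with $\int\phi=1$ and $\int\phi^p$ large). The sign supplied by concavity removes cancellation inside the inner integral, but it does not repair the $L^p(L^1)\not\hookrightarrow L^p(L^p)$ mismatch. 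Consequently you have not produced a $W^{s,p}$ bound on $\nabla f$, and the fractional Sobolev embedding $W^{s,p}\hookrightarrow C^{s-d/p}$ you invoke has nothing to act on.

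The paper sidesteps this by never introducing a Gagliardo seminorm: from the $L^p$-of-Riesz-potential bound and H\"older one gets a Campanato-type estimate, for every ball $B(x_{0},r)$,
\[
\iint_{(B(x_0,r)\cap B)^2}\frac{|f(x)-f(y)-\nabla f(y)\cdot(x-y)|}{|x-y|^{d+1+s}}\,dy\,dx \;\lesssim\; r^{d\left(1-\frac1p\right)}\Lambda,
\]
and then extracts H\"older continuity of $\nabla f$ via second-difference and dyadic-averaging arguments (this is Lemma~\ref{la:morreysob}). To repair your proof you should replace the Gagliardo claim by such a Campanato-to-H\"older argument. Finally, the ``main obstacle'' you flag — comparing ambient and flat distances and the factor $(1+|\nabla f|^2)^{-1/2}$ — is actually routine once $\|\nabla f\|_{L^\infty}\leq 1$, which the bootstrap supplies; the genuine difficulty lies precisely in the step discussed above.
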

Observe that it is by no means clear that the fractional Willmore-energy rules out distant strands to come close to $\Sigma$--unless we work with convex sets, where this self-repulsiveness holds similarly to tangent-point energies, \Cref{th:selfrepulsive}.

A corollary of \Cref{th:uniformbilipschitzparam} is uniform lower Ahlfor's regularity for (convex) sets with bounded fractional Willmore-energy in the subcritical regime.
\begin{corollary}[Uniform lower Ahlfor's regularity]\label{co:ahlfors}
Let $p> \frac{d}{s}$, $s \in (0,1)$ and $\Lambda > 0$. Then there exists a constant $c=c(p,d,s,\Lambda)$ and radius $R_0 = R_0(p,d,s,\Lambda)$ such that the following holds:

Let $\Sigma = \Sigma^d = \partial \Omega$ be the smooth boundary of a compact convex body in $\mathbb{R}^{d+1}$, such that 
\[
 \mathcal{W}_{s,p}(\Sigma) \leq \Lambda.
\]
Then for any $x \in \Sigma$
\[
 \inf_{R \in (0,R_0)} R^{-d} \mathcal{H}^d  (\Sigma \cap B(x,R)) > c
\]
\end{corollary}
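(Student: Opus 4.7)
The plan is to convert the uniform Lipschitz graph control of \Cref{th:uniformbilipschitzparam} into a volume lower bound via the area formula. Fix $x_0\in\Sigma$ and apply \Cref{th:uniformbilipschitzparam} at $x_0$: this produces a fixed radius $R=R(\Lambda,s,p,d)>0$, a rotation $P$, and a smooth function $f:B(0,R)\to\R$ with $\Phi(x):=x_0+P(x,f(x))$ a smooth parametrization of an open patch of $\Sigma$ around $x_0$, satisfying $[\nabla f]_{C^\alpha(B(0,R))}\le C$ with $\alpha:=s-d/p>0$. Since the natural choice of the rotation $P$ aligns the tangent plane $T_{x_0}\Sigma$ with $\R^d\times\{0\}$, we may assume $f(0)=0$ and $\nabla f(0)=0$; combining with the Hölder seminorm gives
\[
 \|\nabla f\|_{L^\infty(B(0,\rho))}\le C\rho^\alpha \quad\text{for every }\rho\le R.
\]

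Next I extract a volume lower bound. Since $\Phi$ is a graph parametrization, its Jacobian equals $\sqrt{\det(I_d+\nabla f\otimes\nabla f)}\ge 1$ pointwise, so the area formula yields
\[
 \mathcal{H}^d\bigl(\Phi(B(0,\rho))\bigr)\ge \omega_d\,\rho^d \quad\text{for every }\rho\le R.
\]
For the image localization: for $x\in B(0,\rho)$ one has $|f(x)|\le\|\nabla f\|_{L^\infty(B(0,\rho))}|x|\le C\rho^{1+\alpha}$, so $|\Phi(x)-x_0|\le\rho\sqrt{1+C^2\rho^{2\alpha}}$. Choose $\rho_0\le R$ so small that $\sqrt{1+C^2\rho_0^{2\alpha}}\le 2$; then for all $\rho\le\rho_0$,
\[
 \Phi(B(0,\rho))\subset \Sigma\cap B(x_0,2\rho).
\]
Setting $r:=2\rho$ and $R_0:=2\rho_0$, we conclude $\mathcal{H}^d(\Sigma\cap B(x_0,r))\ge \omega_d(r/2)^d = c\,r^d$ for all $r\in(0,R_0)$, with $c,R_0$ depending only on $(\Lambda,s,p,d)$.

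The only non-routine point is the passage from a bound on the seminorm $[\nabla f]_{C^\alpha}$ to an $L^\infty$ bound for $\nabla f$ on small balls, which requires $\nabla f(0)=0$. If this normalization is not directly built into the statement of \Cref{th:uniformbilipschitzparam} as used, it can be recovered by an affine change of variables: replacing $P$ with $P\circ\tilde P$ where $\tilde P$ is the rotation taking $T_{x_0}\Sigma$ to $\R^d\times\{0\}$ reparametrizes the same patch as a graph with vanishing gradient at the origin, changes the Hölder seminorm only by a dimensional factor, and does not affect the subsequent Jacobian and containment estimates. Beyond this bookkeeping, the argument is entirely soft.
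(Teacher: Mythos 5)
Your proof is correct and is essentially the argument the paper intends: Corollary~\ref{co:ahlfors} is stated in the paper without an explicit proof, as an immediate consequence of the uniform graph control of \Cref{th:uniformbilipschitzparam}, and your derivation (Jacobian of a graph is $\sqrt{1+|\nabla f|^2}\ge 1$, plus containment $\Phi(B(0,\rho))\subset B(x_0,2\rho)$ once the gradient is small) is the natural way to read it off. Regarding the one point you flag: the normalization $f(0)=0$, $\nabla f(0)=0$ is indeed built into the paper's construction of the parametrization --- the proof of \Cref{th:uniformbilipschitzparam2} begins by translating $x_0$ to the origin and rotating so $T_0\Sigma=\R^d\times\{0\}$, and the iteration in \Cref{la:BetterControl} is explicitly run with $f(0)=0$, $\nabla f(0)=0$ and even returns $\|\nabla f\|_{L^\infty(B(0,\rho))}\le \frac12$. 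So you could skip the Hölder-seminorm-to-$L^\infty$ step entirely and use that $L^\infty$ bound directly to get $\Phi(B(0,\rho))\subset B(x_0,\sqrt{5}\rho/2)$, which avoids the final ``$P\circ\tilde P$'' bookkeeping (which, as written, is a little loose since $\tilde P$ must act as a change of graph direction, possibly shrinking $R$ --- though this is harmless). Either way the argument goes through with constants depending only on $(\Lambda,s,p,d)$.
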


Another immediate consequence is a sort of 
weak Michael-Simon type inequality:
\begin{corollary}[Weak Michael-Simon type inequality]\label{bt-5612}
Let $p> \frac{d}{s}$, $s \in (0,1)$, $\Lambda > 0$, $\alpha \in (0,1]$ and $q \in (1,\infty)$. Then there exists a constant $C=C(p,d,s,\alpha, q, \Lambda)$ such that the following holds.

Let $\Sigma = \Sigma^d = \partial \Omega$ be the smooth boundary of a compact convex body in $\mathbb{R}^{d+1}$, such that
\[
 \mathcal{W}_{s,p}(\Sigma) \leq \Lambda.
\]

Then for any $f \in W^{\alpha,q}(\Sigma)$ with $q \in (1,\frac{d}{\alpha})$ we have
\begin{equation}\label{eq:shittymichaelsimons}
\|f\|_{L^{\frac{dq}{d-\alpha q}}(\Sigma)} \leq C\brac{ [f]_{W^{\alpha,q}(\Sigma)} + \|f\|_{L^q(\Sigma)}}.
\end{equation}
If $\alpha-\frac{d}{q} = \beta$ for some $\beta \in (0,\alpha)$ then
\[
\|f\|_{L^\infty(\Sigma)} + [f]_{C^{\beta}(\Sigma)} \leq C \brac{[f]_{W^{\alpha,q}(\Sigma)} + \|f\|_{L^q(\Sigma)}}.
\]
\end{corollary}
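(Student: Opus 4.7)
The plan is to reduce the stated inequalities to the standard fractional Sobolev and Morrey embeddings on a Euclidean ball, using the uniform local parametrizations furnished by \Cref{th:uniformbilipschitzparam}, and then to glue the local estimates through a covering of $\Sigma$ of bounded multiplicity. Concretely, \Cref{th:uniformbilipschitzparam} provides $R>0$ and $M\geq 1$, depending only on $\Lambda,s,p,d$, so that for every $x_0\in\Sigma$ there is a diffeomorphism $\Phi_{x_0}:B(0,R)\to\Sigma$ with $\Phi_{x_0}(0)=x_0$ which is bi-Lipschitz with constant $M$ (since it is a graph map $x\mapsto x_0+P(x,f(x))$ with uniformly bounded $\nabla f$) and whose area-element Jacobian lies between $M^{-1}$ and $M$. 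Compactness of $\Sigma$ together with a standard packing argument in $\R^{d+1}$ then yields finitely many $\{x_i\}$ such that $U_i:=\Phi_{x_i}(B(0,R/2))$ cover $\Sigma$ while the enlargements $\tilde U_i:=\Phi_{x_i}(B(0,R))$ have overlap multiplicity bounded by some $N=N(\Lambda,s,p,d)$.

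For the fractional Sobolev case I would apply the standard embedding
\[
 \|g\|_{L^{p^*}(B(0,R))}\leq C\bigl([g]_{W^{\alpha,q}(B(0,R))}+\|g\|_{L^q(B(0,R))}\bigr),\qquad p^*=\tfrac{dq}{d-\alpha q},
\]
to $g=f\circ\Phi_{x_i}$ and push back to $\tilde U_i\subset\Sigma$ via the bi-Lipschitz change of variables; this yields the local estimate $\|f\|_{L^{p^*}(U_i)}\leq C\bigl([f]_{W^{\alpha,q}(\tilde U_i)}+\|f\|_{L^q(\tilde U_i)}\bigr)$. Since $p^*\geq q$ and $\Sigma\subset\bigcup_iU_i$, the elementary sequence inequality $(\sum a_i^{p^*})^{1/p^*}\leq(\sum a_i^q)^{1/q}$ gives $\|f\|_{L^{p^*}(\Sigma)}^q\leq\sum_i\|f\|_{L^{p^*}(U_i)}^q$, and the bounded multiplicity of $\{\tilde U_i\}$ turns the resulting sum of $[f]_{W^{\alpha,q}(\tilde U_i)}^q$ and $\|f\|_{L^q(\tilde U_i)}^q$ into the corresponding global quantities, proving \eqref{eq:shittymichaelsimons}.

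For the Morrey case $\beta=\alpha-d/q>0$, the same scheme with Morrey's embedding on $B(0,R)$ yields $\|f\|_{L^\infty(\tilde U_i)}+[f]_{C^\beta(\tilde U_i)}\leq C\bigl([f]_{W^{\alpha,q}(\tilde U_i)}+\|f\|_{L^q(\tilde U_i)}\bigr)$. Taking the supremum over $i$ globalizes the $L^\infty$ bound. For the Hölder seminorm, pairs $x,y\in\Sigma$ with $|x-y|\leq R/(4M)$ lie in some common $\tilde U_i$ by construction of the cover and fall under the local estimate, while for $|x-y|>R/(4M)$ the quotient $|f(x)-f(y)|/|x-y|^\beta$ is bounded by a constant times $\|f\|_{L^\infty(\Sigma)}$, which was just established. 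The only technical care needed in this program lies in verifying that the nonlocal $W^{\alpha,q}$-seminorm pulls back cleanly under the bi-Lipschitz chart $\Phi_{x_i}$; this is routine given $|\Phi_{x_i}(u)-\Phi_{x_i}(v)|\approx|u-v|$ and the uniform Jacobian bound, but it is the only non-standard step in the reduction.
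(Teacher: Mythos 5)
Your proposal is correct and follows essentially the same route as the paper: cover $\Sigma$ by uniformly controlled graph patches from \Cref{th:uniformbilipschitzparam}, apply the Euclidean fractional Sobolev/Morrey embeddings in each chart, and glue via bounded overlap (using the $\ell^q\hookrightarrow\ell^{q^\ast}$ inequality in the Sobolev case and a near/far split in the H\"older case). The only cosmetic difference is that the paper's H\"older argument routes through a partition of unity subordinate to dyadic cubes, whereas you work directly with patches and handle distant pairs by the $L^\infty$ bound.
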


A very similar inequality was obtained in Cabr\'e-Cozzi's \cite{CC19} who only need to assume a lower Ahlfors regularity \Cref{co:ahlfors}, but for all $R > 0$ (and for bounded sets that is clearly not possible). In our case, it follows from a simple covering argument, covering $\R^{d+1}$ with copies of cutoff-functions $\eta \in C_c^\infty(B(0,R))$, $\eta \equiv 1$ in $B(0,R/2)$, and using the local representation by graphs (and the classical Sobolev inequality).

Let us stress that a way stronger, real Michael-Simon inequality was recently obtained by Cabr\'e-Cozzi-Csat\'o \cite{CCC20} -- without the subcritical assumption $p>\frac{d}{s}$, however also using the convexity assumption.

Applying $f \equiv 1$ to \Cref{bt-5612}, equation \eqref{eq:shittymichaelsimons}, we obtain  the following lower bound on the area -- compare this with the scaling \eqref{eq:willmorescaling}. 
\begin{corollary}\label{co:areaundercontrol}
 Let $p> \frac{d}{s}$, $s \in (0,1)$ and $\Lambda > 0$. Then there exists a constant $c=c(p,d,s, \Lambda)$ such that the following holds.

Let $\Sigma = \Sigma^d = \partial \Omega$ be the smooth boundary of a compact convex body in $\mathbb{R}^{d+1}$, and assume
\[
 \mathcal{W}_{s,p}(\Sigma) \leq \Lambda
\]
Then 
\[
 \mathcal{H}^d(\Sigma) \geq c.
\]
\end{corollary}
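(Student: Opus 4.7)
The plan is to follow the hint in the text and simply apply the weak Michael-Simon inequality of \Cref{bt-5612} to the constant function $f \equiv 1$. Since $\Sigma$ is a smooth compact hypersurface, constants lie in every $W^{\alpha,q}(\Sigma)$, so this is legal. To fit the hypotheses of \eqref{eq:shittymichaelsimons}, I would fix once and for all a pair $(\alpha,q)$ depending only on $d$ with $\alpha \in (0,1]$ and $q \in (1,\infty)$ satisfying $\alpha q < d$ (for example $\alpha = \tfrac{1}{2}$, $q=2$ if $d\ge 2$, or $\alpha = \tfrac{1}{3}$, $q=2$ if $d=1$); this ensures the Sobolev exponent $\tfrac{dq}{d-\alpha q}$ in \eqref{eq:shittymichaelsimons} is well defined.

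For $f \equiv 1$ the Gagliardo seminorm $[f]_{W^{\alpha,q}(\Sigma)}$ vanishes, while $\|1\|_{L^r(\Sigma)} = \mathcal{H}^d(\Sigma)^{1/r}$ for every $r$. Substituting into \eqref{eq:shittymichaelsimons} yields
\[
\mathcal{H}^d(\Sigma)^{\frac{d-\alpha q}{dq}} \le C\, \mathcal{H}^d(\Sigma)^{\frac{1}{q}},
\]
with $C = C(p,d,s,\alpha,q,\Lambda)$ from \Cref{bt-5612}. Since
\[
\frac{d-\alpha q}{dq} - \frac{1}{q} = -\frac{\alpha}{d},
\]
rearranging gives $\mathcal{H}^d(\Sigma)^{-\alpha/d} \le C$, i.e.\ $\mathcal{H}^d(\Sigma) \ge C^{-d/\alpha} =: c$, which is the desired uniform lower bound and depends only on $p,d,s,\Lambda$ after $(\alpha,q)$ are fixed in terms of $d$.

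There is no real obstacle, the only thing to be careful about is the choice of the pair $(\alpha,q)$, which must be made so that the subcritical Sobolev embedding $W^{\alpha,q} \hookrightarrow L^{dq/(d-\alpha q)}$ from \Cref{bt-5612} is available; in particular the resulting constant $c$ depends on $d$ through this choice but \emph{not} on $\Sigma$. Note that the resulting lower bound is consistent with the scaling \eqref{eq:willmorescaling}: under $\Sigma \mapsto \lambda \Sigma$ with $\lambda$ small the energy decreases (since $d-sp<0$ in the subcritical regime), hence keeping $\mathcal{W}_{s,p}(\Sigma)\le \Lambda$ forces an upper bound on how small $\mathcal{H}^d(\Sigma)$ can become, which is precisely the content of \Cref{co:areaundercontrol}.
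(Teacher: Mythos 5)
Your proof is correct and is precisely the argument the paper intends: the paper gives no separate proof of \Cref{co:areaundercontrol} beyond the remark that one should apply $f\equiv 1$ to \eqref{eq:shittymichaelsimons}, and your computation (zero Gagliardo seminorm, $\|1\|_{L^r(\Sigma)}=\mathcal{H}^d(\Sigma)^{1/r}$, and the exponent identity $\tfrac{d-\alpha q}{dq}-\tfrac1q=-\tfrac{\alpha}{d}$) fills this in correctly, including the observation that a pair $(\alpha,q)$ with $\alpha q<d$ must be fixed in terms of $d$ so that the resulting constant depends only on $p,d,s,\Lambda$.
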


The uniform parametrization is also useful when discussing flows and minimization problems -- observe however that the convexity assumption rules out surfaces $\Sigma$ of nontrivial genus, so that the only locally minimizing objects are most likely the sphere, cf. \cite{ACFGH03}.

We believe it would be very interesting to start an analysis of fractional Willmore-surfaces -- the scaling-invariant case -- e.g. for curves $\Sigma$ and $\mathcal{W}_{1/2,2}(\Sigma)$, eventually with the goal of developing as sophisticated arguments as for the classical Willmore-surfaces e.g. \cite{W93,Simon1985,BK03,KS04,Riv08}.

As a last point let us remark that without the convexity assumption everything discussed above still applies to a sort-of nonlocal bending energy:
Namely, the convexity assumption in \Cref{th:uniformbilipschitzparam} comes from the fact that we need to consider
\[
\AbsH_{\Sigma,s}(x) := c_s \int_{\Sigma} \frac{\abs{\langle x-y, n(y) \rangle}}{|x-y|^{d+1+s}}\, d\mathcal{H}^d(y).
\]
Observe that $\lim_{s \to 1^-}\AbsH_{\Sigma,s}(x)=|H(x)|$.
We can define then the nonlocal bending energy
\[
 \mathcal{B}_{s,p}(\Sigma) := \|\AbsH_{\Sigma,s}\|_{L^p(\Sigma)}^p.
\]
This sort of energy is even more similar to the tangent-point energy due to the absolute value inside the integral. 
If $\Sigma$ is the boundary of a \emph{convex} set, then 
\[
 \AbsH_{\Sigma,s}(x) = - H_{\Sigma,s}(x),
\]
and thus
\[
 \mathcal{W}_{s,p}(\Sigma) = \mathcal{B}_{s,p}(\Sigma).
\]
Indeed this property is the only reason why we need required convexity in the previous results. Namely we have:

\begin{proposition}
All the previous results are true for possibly \emph{non-convex} surfaces $\Sigma = \partial \Omega$, if we replace the fractional Willmore energy $\mathcal{W}_{s,p}$ by the bending energy $ \mathcal{B}_{s,p}$.
\end{proposition}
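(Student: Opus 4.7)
The plan is to unwind the proofs of \Cref{th:uniformbilipschitzparam} and the self-repulsiveness result \Cref{th:selfrepulsive}, locate the single juncture at which convexity enters, and verify that what is really being used there is an $L^p$-bound on the absolute-value kernel $\AbsH_{\Sigma,s}$. On a convex surface the inner product $\langle x-y, n(y)\rangle$ has a fixed sign, so
\[
|H_{\Sigma,s}(x)| = c_s \int_{\Sigma} \frac{|\langle x-y, n(y)\rangle|}{|x-y|^{d+1+s}}\,d\mathcal{H}^d(y) = \AbsH_{\Sigma,s}(x),
\]
and consequently $\mathcal{W}_{s,p}(\Sigma) = \mathcal{B}_{s,p}(\Sigma)$. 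On a general $\Sigma$ one only has the pointwise inequality $|H_{\Sigma,s}| \leq \AbsH_{\Sigma,s}$, so the fractional Willmore bound is strictly weaker than a bending-energy bound; however $\mathcal{B}_{s,p}$ is by definition exactly an $L^p$-bound on $\AbsH_{\Sigma,s}$, so swapping $\mathcal{W}_{s,p}$ for $\mathcal{B}_{s,p}$ in the hypotheses restores precisely the integral quantity that the proofs actually consume.

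Concretely, I would go through the proof of \Cref{th:uniformbilipschitzparam} and verify that every appearance of the Willmore bound $\mathcal{W}_{s,p}(\Sigma) \leq \Lambda$ is used through the chain $\mathcal{W}_{s,p}(\Sigma) = \|H_{\Sigma,s}\|_{L^p(\Sigma)}^p = \|\AbsH_{\Sigma,s}\|_{L^p(\Sigma)}^p$ (valid in the convex case), and could hence be replaced verbatim by the assumption $\mathcal{B}_{s,p}(\Sigma) = \|\AbsH_{\Sigma,s}\|_{L^p(\Sigma)}^p \leq \Lambda$. The same inspection must be performed for the self-repulsiveness statement \Cref{th:selfrepulsive}, whose proof is modelled on the analogous tangent-point estimates and likewise needs only the signless quantity $\AbsH_{\Sigma,s}$. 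Once both \Cref{th:uniformbilipschitzparam} and \Cref{th:selfrepulsive} have been re-established with $\mathcal{B}_{s,p}$ in place of $\mathcal{W}_{s,p}$, the corollaries \Cref{co:ahlfors}, \Cref{bt-5612}, and \Cref{co:areaundercontrol} follow from exactly the same soft covering and Sobolev-embedding arguments, with no additional appeal to convexity.

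The only genuine obstacle, and the one step the argument hinges upon, is the verification that neither \Cref{th:uniformbilipschitzparam} nor \Cref{th:selfrepulsive} secretly exploits cancellation in the signed integral defining $H_{\Sigma,s}$. Once this is confirmed, the proposition reduces to a transparent bookkeeping exercise: every place where the phrase \emph{convex body} appears together with $\mathcal{W}_{s,p}(\Sigma) \leq \Lambda$ is replaced by \emph{smooth boundary of a bounded open set} together with $\mathcal{B}_{s,p}(\Sigma) \leq \Lambda$, and the proof proceeds unchanged.
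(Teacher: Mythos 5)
Your proposal is correct and matches the paper's approach. The paper in fact \emph{proves} the general statements directly for $\mathcal{B}_{s,p}$ on arbitrary closed hypersurfaces (\Cref{th:uniformbilipschitzparam2}, \Cref{la:unifparam1}, \Cref{la:BetterControl}, and \Cref{bt-56122} all carry the hypothesis $\mathcal{B}_{s,p}(\Sigma)\le\Lambda$ with no convexity), and then deduces the convex-plus-$\mathcal{W}_{s,p}$ versions from the identity $\mathcal{W}_{s,p}=\mathcal{B}_{s,p}$ on convex boundaries; the Proposition is just the reverse reading of that logic, and the paper accordingly gives it no separate proof. You have correctly located the pivot: the proof of \Cref{la:unifparam1} extracts from the energy the iterated integral with $|\langle \Phi(x)-\Phi(y),n(y)\rangle|$ \emph{inside} the inner integral, so it never uses cancellation of the signed kernel, which is exactly the point you flag as the ``one step the argument hinges upon.'' One small inaccuracy: \Cref{th:selfrepulsive} and \Cref{th:existencemin} are already stated with $\mathcal{B}_{s,p}$ and without convexity, so they are not among the results that need translating; only \Cref{th:uniformbilipschitzparam}, \Cref{co:ahlfors}, \Cref{bt-5612}, and \Cref{co:areaundercontrol} are.
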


As an additional feature, we expect that $\mathcal{B}_{s,p}(\Sigma)$ is self-repulsive, meaning that distant strands cannot come close to each other. In other words, $\mathcal{B}_{s,p}(\Sigma)$ preserves embeddings. This is a feature known for subcritical (and to some extent also critical) tangent-point energies, \cite{strvdM,SvdM13}.

\begin{theorem}[Self-repulsiveness and chord-arc constant]\label{th:selfrepulsive}
Assume $p > \frac{d}{s}$ and $s\in (0,1)$. For every $\Lambda > 0$ there exists a constant $\Gamma =\Gamma(p,d,s,\Lambda)> 0$ such that if $\Sigma=\partial \Omega$ is a compact surface  as above (without any convexity assumptions) and
\[
 \mathcal{B}_{s,p}(\Sigma) \leq \Lambda \quad \text{and}\quad \mathcal{H}^d(\Sigma) \leq 1,
\]
then
\[
 |x-y| \leq d_{\Sigma}(x,y) \leq \Gamma |x-y|,
\]
where $d_{\Sigma}$ denotes the intrinsic distance within $\Sigma$.
\end{theorem}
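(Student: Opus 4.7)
The lower bound $|x-y|\leq d_\Sigma(x,y)$ is immediate since any rectifiable curve in $\Sigma$ joining $x$ to $y$ has length at least $|x-y|$. For the upper chord-arc bound, my plan is to combine the uniform local graph representation from \Cref{th:uniformbilipschitzparam} (in its $\mathcal B_{s,p}$-version granted by the preceding proposition) with a self-repulsive pointwise lower bound on $\AbsH_{\Sigma,s}$ that activates whenever two distant sheets of $\Sigma$ come close, in the spirit of the corresponding argument for subcritical tangent-point energies.

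First I would apply the $\mathcal B_{s,p}$-versions of \Cref{th:uniformbilipschitzparam} and \Cref{co:ahlfors} to extract $R,C>0$ depending only on $(\Lambda,p,s,d)$ such that every $x_0\in\Sigma$ admits a graph parametrization $\Phi_{x_0}:B(0,R)\to\Sigma$ with H\"older-controlled gradient and $\mathcal H^d(\Sigma\cap B(x_0,r))\geq cr^d$ for $r\leq R$. In particular each $\Phi_{x_0}$ is bi-Lipschitz onto its image, so $d_\Sigma(\Phi_{x_0}(z_1),\Phi_{x_0}(z_2))\leq C|z_1-z_2|$ on $B(0,R/2)$. For $x,y\in\Sigma$ with $r:=|x-y|<R/4$ I would split cases: if $y\in\Phi_x(B(0,R/2))$ the chord-arc bound follows with constant $C$; otherwise $y$ lies on a ``second sheet'' of $\Sigma$ near $x$, which I aim to exclude for $r$ small.

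To exclude this alternative I would pass to a pair $(x^\ast,y^\ast)$ minimizing the ambient distance between the local sheet of $x$ and the remainder of $\Sigma$. Setting $r^\ast:=|x^\ast-y^\ast|\leq r$, the minimizing property forces $x^\ast-y^\ast$ to be perpendicular to both $T_{x^\ast}\Sigma$ and $T_{y^\ast}\Sigma$, so these tangent planes are parallel and $|\langle x^\ast-y^\ast,n(y^\ast)\rangle|=r^\ast$. On the local graph $\Phi_{y^\ast}(B(0,r^\ast/4))\subset B(x^\ast,2r^\ast)$ the normal $n$ is H\"older-close to $n(y^\ast)$, yielding
\[
\frac{|\langle x^\ast-z,n(z)\rangle|}{|x^\ast-z|^{d+1+s}} \geq c\,(r^\ast)^{-d-s}
\]
on a set of $\mathcal H^d$-measure $\geq c(r^\ast)^d$, so $\AbsH_{\Sigma,s}(x^\ast)\geq c(r^\ast)^{-s}$. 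Since the two sheets are nearly parallel, perturbing $x^\ast$ tangentially within $\Phi_{x^\ast}(B(0,r^\ast/4))$ by $\lesssim r^\ast$ changes $\langle x-\cdot,n(\cdot)\rangle$ only negligibly, so the same lower bound persists on a set of measure $\geq c(r^\ast)^d$. Integrating,
\[
\Lambda\;\geq\;\mathcal B_{s,p}(\Sigma)\;\geq\; c\,(r^\ast)^{d-sp},
\]
and the subcritical hypothesis $p>d/s$ makes the exponent $d-sp$ strictly negative, forcing $r^\ast\geq r_0(\Lambda,p,s,d)>0$. Since $r\geq r^\ast$, the separate-sheet alternative is incompatible with $r<r_0$.

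To finish, for $|x-y|<\min(r_0,R/4)$ the previous step gives $d_\Sigma(x,y)\leq C|x-y|$. For $|x-y|\geq\min(r_0,R/4)$ it suffices to bound $\operatorname{diam}_{d_\Sigma}(\Sigma)$ by a constant $D=D(\Lambda,p,s,d)$: the lower Ahlfors bound and $\mathcal H^d(\Sigma)\leq 1$ permit a cover of $\Sigma$ by at most $Cr_0^{-d}$ balls of radius $r_0$, and the local graphs link neighbouring centers by intrinsic paths of length $\leq Cr_0$, so chaining yields the desired intrinsic-diameter bound. Setting $\Gamma$ equal to the maximum of $C$ and $D/\min(r_0,R/4)$ closes the argument. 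The main obstacle I anticipate is the pointwise lower bound on $\AbsH_{\Sigma,s}$: the minimizing-pair reduction secures the perpendicularity that produces the $(r^\ast)^{-d-s}$ scale and the near-parallelism that stabilizes the estimate under perturbing $x$, but one must verify carefully that no sign-cancellation inside the integral destroys it — and this is precisely the reason we work with $\mathcal B_{s,p}$ rather than $\mathcal W_{s,p}$, since on a non-convex surface the signed integrand could cancel between two opposing nearby sheets.
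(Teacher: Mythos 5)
The paper does not actually include a proof of \Cref{th:selfrepulsive}; the theorem is stated in the introduction with a pointer to the tangent-point literature (\cite{strvdM,SvdM13}) and then used as a black box in the discussion of \Cref{th:existencemin}. So there is no ``paper proof'' to compare against; what follows is an assessment of your argument on its own merits and against the tangent-point template the authors implicitly invoke.

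Your overall strategy is the correct one and mirrors the Strzelecki--von der Mosel argument: uniform $C^{1,\sigma}$ graph representation from \Cref{th:uniformbilipschitzparam2}, a minimizing-pair reduction to force the two near sheets to be locally parallel, a pointwise lower bound $\AbsH_{\Sigma,s}(x)\gtrsim(r^\ast)^{-s}$ on a set of measure $\gtrsim(r^\ast)^d$, integration to get $\Lambda\geq\mathcal B_{s,p}(\Sigma)\gtrsim(r^\ast)^{d-sp}$, and then subcriticality $d-sp<0$ to force $r^\ast\geq r_0(\Lambda,d,s,p)$, followed by a chaining/covering argument (using lower Ahlfors regularity and $\mathcal H^d(\Sigma)\leq 1$) to bound the intrinsic diameter. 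Your flagging of the sign issue (why the argument must run through $\mathcal B_{s,p}$ rather than $\mathcal W_{s,p}$) is also exactly the right point.

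Two technical places deserve more care than the sketch gives them. First, ``minimizing the ambient distance between the local sheet of $x$ and the remainder of $\Sigma$'': as written these two sets overlap along the boundary circle and the infimum is trivially $0$; you must take disjoint compact pieces (e.g.\ $\Phi_x(\overline{B(0,R/4)})$ versus $\Sigma\setminus\Phi_x(B(0,R/2))$, or better, globally minimize $|a-b|$ subject to $d_\Sigma(a,b)\geq R/2$). With the global formulation one should also verify the minimizer is interior: if $d_\Sigma(a^\ast,b^\ast)=R/2$ exactly, then $b^\ast$ lies in the graph over $a^\ast$, and the bi-Lipschitz bound $|\nabla f|\leq\tfrac12$ already gives $|a^\ast-b^\ast|\geq\tfrac{2}{\sqrt5}\,d_\Sigma(a^\ast,b^\ast)$, so the boundary case only occurs at a scale comparable to $R$ and is automatically outside the regime $r^\ast\ll R$ where you invoke perpendicularity; this needs to be said, since without it the Lagrange-multiplier step is not available. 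Second, the chaining step implicitly uses that $\Sigma$ is connected (otherwise $d_\Sigma\equiv\infty$ between components and the conclusion is vacuous false); this is an implicit hypothesis of the theorem and worth stating. Once these are patched, your argument is a sound proof of the statement.
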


The assumptions on $\mathcal{H}^d(\Sigma)$ is crucial. Recall that we are subcritical, so not scaling invariant, but rather
\[
 \mathcal{B}_{s,p}(\lambda \Sigma) = \lambda^{d-sp} \mathcal{B}_{s,p}(\Sigma), \quad \lambda >0.
\]
This implies that if $p>\frac{d}{s}$ for any family of finite energy surfaces $\mathcal{S}$ with the property $\lambda \mathcal{S} \subset \mathcal{S}$, we have
\[
 \inf_{\Sigma \in \mathcal{S}} \mathcal{B}_{s,p}(\Sigma) = 0.
\]
Note this does not contradict the uniform parametrization, \Cref{th:uniformbilipschitzparam}. So when we talk about minimizer, we have to bound the area from above.

\begin{theorem}[Existence of minimizers]\label{th:existencemin}
Let $p > \frac{d}{s}$ and $s\in(0,1)$. For every genus $g$ there exists an embedded $C^1$-surface $\Sigma$ minimizing the $\mathcal{B}_{s,p}$-energy among all embedded $C^1$-surfaces $\tilde{\Sigma}$ of the same genus with $\mathcal{H}^d(\tilde{\Sigma}) \leq 1$.
\end{theorem}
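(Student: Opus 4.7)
The plan is the direct method of the calculus of variations, powered by the regularity and compactness estimates already assembled. Fix a genus $g$: a smooth reference surface of genus $g$ rescaled to unit area serves as an admissible competitor with finite bending energy, so the infimum is finite. Take a minimizing sequence $\Sigma_n = \partial \Omega_n$ of embedded $C^1$-surfaces of genus $g$ with $\mathcal{H}^d(\Sigma_n) \leq 1$ and $\mathcal{B}_{s,p}(\Sigma_n) \leq \Lambda$. By the bending-energy version of \Cref{th:uniformbilipschitzparam}, each $\Sigma_n$ admits at every point a $C^{1,s-d/p}$ graph parametrization over a ball of uniform radius $R = R(\Lambda)$ with uniformly bounded H\"older seminorm of $\nabla f_n$. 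The induced lower Ahlfors regularity (\Cref{co:ahlfors}) and $\mathcal{H}^d(\Sigma_n) \leq 1$ cap the number of disjoint such patches; combined with the chord-arc estimate of \Cref{th:selfrepulsive} this yields a uniform bound on the extrinsic diameter of $\Sigma_n$. After translating we may assume $\Sigma_n \subset B(0,D)$ with $D$ independent of $n$.

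Blaschke selection produces a subsequence converging in Hausdorff distance to a compact set $\Sigma_\infty$. Applying Arzel\`a--Ascoli chart by chart to the $f_n$, whose gradients are uniformly $C^{s-d/p}$-bounded, and extracting a diagonal subsequence, we upgrade the Hausdorff convergence to local $C^1$-convergence of the parametrizations. Hence $\Sigma_\infty$ is a compact $C^{1,s-d/p}$ hypersurface, and the chord-arc inequality passes to the limit, ensuring $\Sigma_\infty$ is embedded with a uniform tubular neighborhood. For $n$ large, $\Sigma_n$ lies inside this tube and the normal projection onto $\Sigma_\infty$ is a $C^1$-diffeomorphism; in particular $\Sigma_\infty$ has genus $g$, and $\mathcal{H}^d(\Sigma_\infty) \leq 1$ follows by lower semi-continuity under $C^1$-graph convergence.

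It remains to prove $\mathcal{B}_{s,p}(\Sigma_\infty) \leq \liminf_n \mathcal{B}_{s,p}(\Sigma_n)$. Pulling $\AbsH_{\Sigma_n,s}$ back to common graph charts via the nearest-point diffeomorphism produces a uniformly $L^p$-bounded family; a further subsequence converges weakly in $L^p$ to some limit $h$. Away from the diagonal the integrand defining $\AbsH_{\Sigma_n,s}$ is continuous and converges pointwise by local $C^1$-convergence and uniform control of the normal; near the diagonal the uniform $C^{1,s-d/p}$ H\"older estimate provides an $n$-independent equi-integrability bound, so no $L^p$-mass concentrates on the diagonal. Together these identify $h = \AbsH_{\Sigma_\infty,s}$, and weak lower semi-continuity of the $L^p$-norm finishes the argument. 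The main obstacle is precisely this last step: the uniform regularity from \Cref{th:uniformbilipschitzparam} keeps $\AbsH_{\Sigma_n,s}$ bounded only in $L^p$ and not in $L^\infty$, so dominated convergence is unavailable, and one must combine weak $L^p$-compactness with a quantitative near-diagonal estimate to identify the weak limit and exclude concentration---this is where the subcriticality $p > d/s$ is indispensable.
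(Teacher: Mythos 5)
Your overall strategy—direct method, uniform $C^{1,s-d/p}$ graph control from \Cref{th:uniformbilipschitzparam}, the chord-arc/non-collapsing from \Cref{th:selfrepulsive}, a diameter bound, Arzel\`a--Ascoli on charts, and identification of the limit's topology via a tubular-neighborhood projection—matches the compactness framework the paper invokes from \cite{Breuning12}, so up to the lower semicontinuity step the proposal is in line with the intended argument. The issue is the final step.

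The claim that \emph{``near the diagonal the uniform $C^{1,s-d/p}$ H\"older estimate provides an $n$-independent equi-integrability bound''} is not correct. Writing $\sigma = s-\frac{d}{p}$, the $C^{1,\sigma}$ bound gives $\abs{\langle x-y, n(y)\rangle} \lesssim \abs{x-y}^{1+\sigma}$, hence the inner integrand of $\AbsH_{\Sigma_n,s}(x)$ is controlled by $\abs{x-y}^{1+\sigma-(d+1+s)} = \abs{x-y}^{-d-\frac{d}{p}}$. Integrating over the $d$-dimensional $\Sigma_n$ near $x$ yields $\int_0^\delta r^{-\frac{d}{p}-1}\,dr = +\infty$, so this bound provides no uniform domination of the inner integral near the diagonal. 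Consequently you have not established pointwise a.e.\ convergence of the pulled-back $\AbsH_{\Sigma_n,s}$, and the identification $h = \AbsH_{\Sigma_\infty,s}$ of the weak $L^p$-limit does not follow from what you wrote. (Finiteness of $\AbsH_{\Sigma,s}(x)$ a.e.\ is controlled only through the $L^p$-bound itself, not through the $C^{1,\sigma}$ seminorm.)

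Fortunately, the lower semicontinuity is softer than you make it. Since the integrand of $\AbsH$ is nonnegative, you do not need to identify the weak limit; Fatou's lemma applied twice suffices. After pulling everything back by the nearest-point diffeomorphisms $\Psi_n\colon\Sigma_\infty\to\Sigma_n$ (which converge to the identity in $C^1$, so Jacobians converge to $1$ and normals converge locally uniformly), the inner integrand converges pointwise for $w'\neq w$, and Fatou on the inner integral gives
\[
\liminf_{n\to\infty}\AbsH_{\Sigma_n,s}\bigl(\Psi_n(w)\bigr) \;\geq\; \AbsH_{\Sigma_\infty,s}(w) \quad\text{for every } w.
\]
A second application of Fatou on the outer integral (using continuity and monotonicity of $t\mapsto t^p$ together with $\Jac\Psi_n\to 1$) yields $\liminf_n \mathcal{B}_{s,p}(\Sigma_n) \geq \mathcal{B}_{s,p}(\Sigma_\infty)$. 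Equivalently, if you insist on the weak $L^p$ route: for any nonnegative test function $\phi$ one has $\int h\phi = \lim_n\int \AbsH_{\Sigma_n,s}\phi \geq \int \AbsH_{\Sigma_\infty,s}\phi$ by the inner Fatou bound, hence $h\geq\AbsH_{\Sigma_\infty,s}$ a.e.; this one-sided identification is enough. Replace the equi-integrability claim with either of these and the lower semicontinuity step is complete.

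One minor remark: under the $C^1$-graph convergence you establish, the area is actually \emph{continuous}, not merely lower semicontinuous, which is what one expects since a minimizer should saturate the area constraint; the weaker statement you use is nevertheless enough to verify admissibility of $\Sigma_\infty$.
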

Observe again: if we assume a priori convexity, whence $\mathcal{B}_{s,p} = \mathcal{W}_{s,p}$, then the same result is true -- and thus we also find minimizers for the fractional Willmore energy $\mathcal{W}_{s,p}$, if $p > \frac{d}{s}$, among all \emph{convex} sets. However, there is no convex surfaces of nontrivial genus, so the above minimization problem is likely converging simply to the sphere (as of now, we do not know for a fact that the minimizer is the sphere).

The proof of \Cref{th:existencemin} almost immediately follows from the uniform bound on the H\"older constant of the derivative of the parametrization, \Cref{th:uniformbilipschitzparam}, and the non-collapsing guaranteed by \Cref{th:selfrepulsive}, combined with the arguments in \cite{Breuning12}.

We also remark that we are not aware of a commonly accepted way of extending the fractional mean curvature to higher co-dimension -- since there is no commonly accepted way of extending the perimeter to higher co-dimension, however see \cite{PPS22,fracsurfaceMihailaSeguin}. In contrast to this, it is easy to extend the functional $\mathcal{B}_{s,p}$ to higher-codimension manifolds $\Sigma^{d} \subset \R^N$,
\[
\mathcal{B}_{s,p}(\Sigma):= \int_{\Sigma} \brac{\int_{\Sigma} \frac{|\Pi^\perp(y) (x-y)|}{|x-y|^{d+s+1}} dy}^p.
\]
Here $\Pi^\perp(y)$ denotes the orthogonal projection from $\R^N$ onto $T_{y}^\perp \Sigma$. In this note we focus only on co-dimension $1$ surfaces, but our arguments easily extend to higher codimension $\mathcal{B}_{s,p}$ -- indeed with little extra work one obtains then minimizing manifolds in any ambient isotopy class (again in the subcritical case only). Again, the scaling invariant case $p = \frac{d}{s}$ is extremely challenging.

We conclude this introduction with several directions that we find interesting, are partially working on, but invite anyone interested to study

\begin{itemize}
 \item Classification of minimizers in subcritical and scaling-invariant case.
 \item The scaling-invariant case: existence of minimizers, regularity of minimizers.
 \item The fractional Willmore-flow, existence, uniqueness, and bubbling.
\end{itemize}

\subsection*{Acknowledgment}
Visits of A.S. to J. S., and G.G. to A.S. are gratefully acknowledged.
S.B., G.G., A.S. thank the mathematical research institute MATRIX in Australia where part of this research was performed. US-travel to this conference at MATRIX was funded by NSF DMS-2031696.

A.S. was funded NSF Career DMS-2044898. A.S. is a Alexander-von-Humboldt fellow. J.S. received funding through EPSRC, ``Stability for nonlocal curvature functionals'', No EP/W014807/2.

\section{Proof of Theorem~\ref{th:uniformbilipschitzparam}}

We believe that the following can be relatively easily generalized to higher co-dimension, and to surfaces $\Sigma$ with boundary or which do not bound a set. In particular it would apply to a convex or concave subset of $\Sigma$. For the (subcritical) tangent-point energies and probably many other energies the following argument is likely well-known, albeit possibly not written down.

\Cref{th:uniformbilipschitzparam} is a consequence of the following \Cref{th:uniformbilipschitzparam2}. Yet again we stress that if $\Sigma$ is the boundary of \emph{convex} sets then $\mathcal{B}_{s,p}(\Sigma) = \mathcal{W}_{s,p}(\Sigma)$, so the following theorem applies to the fractional Willmore energy for \emph{convex} sets $\Sigma$.

\begin{theorem}\label{th:uniformbilipschitzparam2}
Let $p>\frac{d}{s}$ and $s\in(0,1)$. Then for any $\Lambda > 0$ there exists $R =R(s,d,p,\Lambda) > 0$ such that the following holds:

Let $\Sigma$ be a closed smooth $d$-dimensional hypersurface, such that
\[
  \mathcal{B}_{s,p}(\Sigma) \equiv c_{s,p} \int_{\Sigma} \brac{\int_{\Sigma} \frac{\abs{\langle x-y, n(y) \rangle}}{|x-y|^{d+1+s}}\, d\mathcal{H}^d(y)}^p d \mathcal{H}^d(x)\leq \Lambda.
\]
Then for any $x_0 \in \Sigma$ there exists a rotation $P \in SO(N)$ and a smooth map
\[
 f: B(0,R) \to \R
\]
with
\[
 [\nabla f]_{C^{s-d/p}(B(0,R))} \leq C(\Lambda,s,p,d),
\]
 so that $x_0+P(x,f(x))$ is a local parametrization of $\Sigma$, that is
\[
 \Phi (x) := x_0 + P(x,f(x))
\]
is a diffeomorphism as a map $B(0,R) \to \Phi(B(0,R)\subset \Sigma$.
\end{theorem}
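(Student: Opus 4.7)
The plan is to establish the uniform graph parametrization by a continuity/maximality argument combined with an \emph{a priori} H\"older estimate on $\nabla f$. Fix $x_0 \in \Sigma$, rotate so that $n(x_0) = e_{d+1}$ and translate so that $x_0 = 0$. By smoothness, $\Sigma$ is locally the graph of some smooth $f \colon B(0,\rho) \to \R$ with $f(0) = 0$ and $\nabla f(0) = 0$. Let $R_0$ be the supremum of radii $\rho$ on which such a graph representation exists with $\|\nabla f\|_{L^\infty(B(0,\rho))} \leq \tfrac{1}{2}$. Showing $R_0 \geq R(\Lambda,s,p,d)$ reduces to proving a uniform $C^{s-d/p}$-bound on $\nabla f$ over $B(0,R_0)$ with a constant depending only on $\Lambda,s,p,d$ (not on $R_0$): combined with $\nabla f(0) = 0$, this forces $|\nabla f(x')| \leq C|x'|^{s-d/p}$ throughout $B(0,R_0)$, and the defining maximality of $R_0$ then gives the universal lower bound $R_0 \geq (1/(2C))^{p/(sp-d)}$.

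In graph coordinates the kernel of $\AbsH_{\Sigma,s}$ simplifies dramatically. With $\Phi(y') = (y', f(y'))$ and $n(\Phi(y')) = (-\nabla f(y'), 1)/\sqrt{1+|\nabla f(y')|^2}$, the numerator is precisely a second-order Taylor remainder,
\[
\langle \Phi(x')-\Phi(y'), n(\Phi(y'))\rangle = \frac{T_2(x',y')}{\sqrt{1+|\nabla f(y')|^2}}, \qquad T_2(x',y') := f(x')-f(y')-\nabla f(y')\cdot(x'-y'),
\]
while $|\Phi(x')-\Phi(y')|$ is comparable to $|x'-y'|$ on $B(0,R_0)$ because $|\nabla f| \leq 1/2$ there. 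Since the $\AbsH$-integrand is nonnegative, restricting the $y$-integration to the graph piece $\Phi(B(0,R_0))$ only decreases the integral and yields the pointwise lower bound
\[
\AbsH_{\Sigma,s}(\Phi(x')) \gtrsim \int_{B(0,R_0)} \frac{|T_2(x',y')|}{|x'-y'|^{d+1+s}}\, dy'.
\]
In particular, taking $L^p$ in $x'$, the right-hand side is bounded in $L^p(B(0,R_0))$ by $C\Lambda^{1/p}$.

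The H\"older estimate on $\nabla f$ is then extracted from the symmetrization identity
\[
T_2(x',y') + T_2(y',x') = (\nabla f(x')-\nabla f(y'))\cdot(x'-y'),
\]
together with an averaging over the direction $(x'-y')/|x'-y'|$ that converts the projection onto that direction into the full difference $|\nabla f(x')-\nabla f(y')|$. The upshot is a Gagliardo-type estimate
\[
\int_{B(0,R_0)} \frac{|\nabla f(x')-\nabla f(y')|}{|x'-y'|^{d+s}}\, dy' \lesssim \AbsH_{\Sigma,s}(\Phi(x')) + \widetilde{S}(x'),
\]
with a ``swap'' companion $\widetilde{S}$ obtained by interchanging the two arguments of $T_2$. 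Raising to the $p$-th power, integrating, and invoking Morrey's embedding $W^{s,p}(B(0,R_0)) \hookrightarrow C^{s-d/p}(B(0,R_0))$---valid precisely because $sp > d$ in the subcritical regime---produce the required universal bound $[\nabla f]_{C^{s-d/p}(B(0,R_0))} \leq C(\Lambda,s,p,d)$. The main obstacle is that the integrand of $\AbsH_{\Sigma,s}$ is asymmetric in $x$ and $y$ (the normal sits at $y$, not at $x$), so $\widetilde{S}$ is not itself an $\AbsH$-quantity; controlling its $L^p(B(0,R_0))$-norm uniformly by $\mathcal{B}_{s,p}(\Sigma)^{1/p}$ requires either a Fubini/Minkowski relabeling on $\Sigma$ exploiting $sp>d$, or, alternatively, a direct Campanato-type oscillation estimate that bypasses the symmetrization altogether.
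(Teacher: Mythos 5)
Your overall architecture matches the paper's: reduce to a graph $\Phi=(\cdot,f(\cdot))$, observe that $\langle\Phi(x)-\Phi(y),n(\Phi(y))\rangle$ is the second--order Taylor remainder $T_2(x,y)$ divided by $\sqrt{1+|\nabla f(y)|^2}$, bound the resulting integral quantity by $\mathcal{B}_{s,p}(\Sigma)^{1/p}$ up to powers of $1+\|\nabla f\|_\infty$ (this is exactly the paper's \Cref{la:unifparam1}), then invoke a Morrey--Sobolev type embedding to deduce a uniform $C^{s-d/p}$ bound on $\nabla f$, and close with a maximality/continuity argument. You also correctly diagnose the asymmetry-of-the-kernel issue and propose the two natural remedies; the paper in fact blends them in \Cref{la:morreysob} by first applying H\"older's inequality to reduce to an $L^1\times L^1$ double integral over $B(x_0,r)\cap B$ --- which makes the Fubini swap harmless --- and then running a Campanato-type decay estimate on the second differences $f(x)+f(y)-2f(\tfrac{x+y}{2})$, precisely sidestepping the ``averaging over directions'' step (which, as stated, converts $(\nabla f(x)-\nabla f(y))\cdot\tfrac{x-y}{|x-y|}$ into $|\nabla f(x)-\nabla f(y)|$ and is not straightforward for a non--radial kernel).

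However, there is a genuine gap in the maximality argument, and it is not cosmetic. The Morrey--Sobolev estimate you invoke is an \emph{interior} estimate: the paper's \Cref{la:morreysob} gives control of $[Df]_{C^{s-d/p}}$ only on $B(0,\tfrac34\rho)$, not on all of $B(0,\rho)$, and near $\partial B(0,\rho)$ the second-difference estimate degenerates since $\dist(x_0,\partial B)$ is small. You nevertheless assert the $C^{s-d/p}$ bound on the \emph{full} ball $B(0,R_0)$ and then deduce $|\nabla f(x')|\leq C|x'|^{s-d/p}$ on $B(0,R_0)$, which is what makes the maximality conclusion $R_0\geq(1/(2C))^{p/(sp-d)}$ work. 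But the constraint $\|\nabla f\|_\infty\leq 1/2$ will typically be binding in the annulus $B(0,R_0)\setminus B(0,\tfrac34 R_0)$ --- exactly where the interior estimate gives you nothing. The paper fills this gap with \Cref{la:BetterControl}: for $y\in B(0,\rho)\setminus B(0,\tfrac34\rho)$ it re-centers the graph at an interior point $x_1=\tfrac{3\rho}{4}\tfrac{y}{|y|}$, uses the hypothesis to obtain a second graph $\tilde f$ around $(x_1,f(x_1))$, applies the interior estimate to $\tilde f$, and then translates the bound on $\nabla\tilde f$ back to a bound on $\nabla f(y)$ via the Lipschitz dependence of the unit normal on the gradient and a projection argument. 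Without an analogue of this re-centering step, your argument does not close. A secondary omission: you implicitly use that once $\|\nabla f\|_\infty<1/2$ strictly, the graph extends to a larger ball; this requires the implicit-function-theorem extension lemma (\Cref{la:ift} in the paper), which should be stated or cited rather than left implicit.
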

\begin{remark}
By scaling, one can easily make the constant $C(d,s,p,\Lambda)$ explicit in terms of $\Lambda$.
%
%
%
%
%
\end{remark}

Fix any point $x_0 \in \Sigma$. By translation we may assume that $x_0 = 0$. By a rotation $P$ we may assume that $\Sigma$ is horizontal at $0$, meaning that
\[
 T_0 \Sigma = \R^{d} \times \{0\}
\]
There is a small neighborhood $U \subset \R^{d+1}$ of $0$ such that
\[
 U \cap \Sigma
\]
can be represented as a graph, i.e. for some $\rho>0$ there is $f: B(0,\rho) \to \Sigma$ such that
\[
 \Phi(\cdot) := (\cdot,f(\cdot)) : B(0,\rho) \to \Sigma
\]
is a diffeomorphism onto its image. The idea is that if we have uniform control on $\nabla f$ then we can extend $f$ to a larger ball.
Indeed, this is a consequence of the implicit function theorem:
\begin{lemma}\label{la:ift}
Let $\Sigma$ be a smooth $d$-manifold without boundary in $\R^N$, $N=d+1$.
Fix some $\alpha > 0$. Assume $\rho > 0$ and that there exists a smooth $f: B(0,\rho) \to \R$ such that
\[
\Phi = (\cdot,f(\cdot)): B(0,\rho) \to \Sigma
\]
is a local parametrization. If $f \in C^1(B(0,\rho))$  with
\[
 \|\nabla f\|_{L^\infty(B(0,\rho))}\leq 1
\]
and uniformly continuous first derivatives, then there exists $\rho_2 > \rho$ and a smooth extension $f$ of $\tilde{f}$
\[
 \tilde{f}: B(0,\rho_2) \to \R
\]
such that
\[
\Phi = (\cdot,\tilde{f}(\cdot)): B(0,\rho_2) \to \Sigma
\]
is a local parametrization.
\end{lemma}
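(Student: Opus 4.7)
\textbf{Proof plan for Lemma \ref{la:ift}.} The plan is to apply the implicit function theorem uniformly along the boundary $\partial B(0,\rho)$ and then to patch the resulting local graph pieces into a single extension of $f$.

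First I would use the uniform continuity of $\nabla f$ to extend $f$ and $\nabla f$ continuously to $\overline{B(0,\rho)}$, with the pointwise bound $|\nabla f(x)| \leq 1$ preserved on the closure. Since $\Sigma$ is a closed manifold without boundary in $\R^N$, it is closed as a subset of $\R^N$, so for every $x_* \in \partial B(0,\rho)$ the limit point $p_* := (x_*, f(x_*))$ lies on $\Sigma$. On the interior the map $\Phi$ is a local parametrization, so the tangent space at any $(x,f(x))$ with $x \in B(0,\rho)$ is spanned by the vectors $e_i + \partial_i f(x)\, e_{d+1}$ for $i=1,\dots,d$. Passing to the limit $x \to x_*$ (using continuity of the Gauss map on the smooth manifold $\Sigma$ and continuity of $\nabla f$), one gets that $T_{p_*}\Sigma$ is spanned by $e_i + \partial_i f(x_*)\, e_{d+1}$. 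In particular, the projection $\pi\colon T_{p_*}\Sigma \to \R^d\times\{0\}$ has matrix with rows $e_i$ in the horizontal slot and is the identity, hence an isomorphism with operator norm of the inverse bounded quantitatively in terms of $\|\nabla f(x_*)\| \le 1$.

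Next I would invoke the standard implicit function theorem at each such $p_*$: there exist an open neighborhood $V_{x_*} \ni p_*$ in $\R^N$, an open neighborhood $W_{x_*}\subset \R^d$ of $x_*$, and a smooth function $g_{x_*}\colon W_{x_*}\to \R$ such that $\Sigma \cap V_{x_*}$ coincides with the graph of $g_{x_*}$. Crucially, because $T_{p_*}\Sigma$ is uniformly far from vertical (the projection $\pi$ has a uniform lower bound on the smallest singular value), the quantitative form of the implicit function theorem yields a uniform lower bound on the radius of $W_{x_*}$, independent of $x_* \in \partial B(0,\rho)$. By compactness of $\partial B(0,\rho)$, a finite subcover $\{W_{x_*^{(j)}}\}_{j=1}^k$ then covers an open annular region containing some $B(0,\rho_2) \setminus \overline{B(0,\rho)}$ with $\rho_2 > \rho$.

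Finally, I would patch. On each overlap $W_{x_*^{(j)}} \cap W_{x_*^{(i)}}$ both $g_{x_*^{(j)}}$ and $g_{x_*^{(i)}}$ represent $\Sigma$ as a graph over the same horizontal $d$-plane; by uniqueness of the local graph representation (again guaranteed by the tangent space being non-vertical, so that only one sheet of $\Sigma$ meets a sufficiently small vertical cylinder), they must coincide wherever both are defined. The same uniqueness shows $g_{x_*^{(j)}} = f$ on $W_{x_*^{(j)}} \cap B(0,\rho)$. Hence the functions glue together with $f$ to a well-defined smooth $\tilde f \colon B(0,\rho_2)\to \R$ with $\tilde f|_{B(0,\rho)} = f$, and $\widetilde\Phi := (\cdot,\tilde f(\cdot))$ is a local parametrization of $\Sigma$.

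The main obstacle is quantifying the implicit function theorem uniformly along $\partial B(0,\rho)$: one must show that the size of the graph neighborhood depends only on the upper bound $\|\nabla f\|_\infty \le 1$ and on the modulus of continuity of $\nabla f$, not on the point $x_*$. This is where the hypothesis of uniformly continuous first derivatives is essential; it ensures that the tangent plane at $p_*$ varies continuously, that the constants in IFT can be chosen uniformly, and that the graphs $g_{x_*^{(j)}}$ remain close to $f$ on overlaps so that the gluing is consistent.
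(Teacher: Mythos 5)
Your proposal is correct and takes essentially the same approach as the paper: extend $f$ and $\nabla f$ to $\overline{B(0,\rho)}$ by uniform continuity, observe that $T_{(x_*,f(x_*))}\Sigma = \mathrm{span}\{(e_i,\partial_i f(x_*))\}$ at boundary points (so the tangent plane is non-vertical), apply the implicit function theorem near each boundary point to obtain a local graph, and cover $\partial B(0,\rho)$ by compactness. The only presentational difference is that the paper makes the IFT application concrete by introducing the signed distance $d(x,t) = \langle (x,t)-\pi(x,t), n(\pi(x,t))\rangle$ via a tubular neighborhood and checking $\partial_t d \neq 0$, whereas you invoke the IFT directly on $\Sigma$ as a local graph; both routes are standard and equivalent.
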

\begin{proof}
Since $f,\nabla f$ are uniformly continuous on the open set $B(0,\rho)$ there exists a $C^{1}$-extension of $f$ to $\overline{B(0,\rho)}$. By continuity we still have
\[
 \Phi(\cdot) := (\cdot,f(\cdot)): \overline{B(0,\rho)} \to \Sigma.
\]

Now fix any $x_0 \in \partial B(0,\rho)$. There exists a small tubular neighborhood $U \subset \R^{d+1}$ around $(x_0,f(x_0))$ for which the nearest point projection $\pi: U \to \Sigma$ exists. Set
\[
 d(x,t) := \langle (x,t) - \pi(x,t), n(\pi(x,t))\rangle, \quad \text{$(x,t) \in U$}.
\]
Here $n(\pi(x,t))$ denotes a choice of unit normal. Then $d$ is smooth, and we can ensure that
\[
 \forall (x,t) \in U:\quad (x,t) \in \Sigma\quad \Leftrightarrow\quad d(x,t) = 0.
\]
Indeed $(x,t) - \pi(x,t) = 0$ if $(x,t) \in \Sigma$, and if $(x,t) \not \in \Sigma$ then $(x,t) - \pi(x,t) \in (T_{\pi(x,t)} \Sigma)^\perp$.

If we can show that $\partial_t d(x_0,f(x_0)) \neq 0$, then by the implicit function theorem there exists a possibly smaller neighborhood $U$ (not relabeled) of $(x_0,f(x_0))$ and for some $\eps =\eps(x_0) > 0$ a map $\tilde{f}: B(x_0,\eps) \to \R$ such that
\[
 \forall (x,t) \in U:\quad (x,t) \in \Sigma\quad \Leftrightarrow\quad d(x,t) = 0 \quad \Leftrightarrow\quad t = \tilde{f}(x)
\]
Since $B(x_0,\eps) \cap B(0,\rho) \neq \emptyset$ we conclude that $\tilde{f}$ is an extension of $f$ to $B(x_0,\eps) \cup B(0,\rho)$. We can do this for any $x_0 \in \partial B(0,\rho)$ (which is compact). Take a finite sequence $(x_i,\eps(\rho_i))$ so that $V:=\bigcup B(x_i,\eps_i)$ covers $\partial B(0,\rho)$. Setting $U:=V \cup B(0,\rho)$ and $2\eps:=\inf\lbrace \abs{x-y}:\, x\in B(0,\rho)\,\mbox{and } y\in  \R^d\setminus U\rbrace$, we conclude that we have found an extension $\tilde{f}: B(0,\rho+\eps) \to \R$ with the desired properties.

So all we need to show is $\partial_t d(x_0,f(x_0)) \neq 0$. We observe that
\[
 \frac{d}{d\eps} \Big |_{\eps =0} d((x_0,f(x_0)) + \eps v) = 0 \quad \forall v \in T_{(x_0,f(x_0))} \Sigma.
\]
On the other hand if $v \in (T_{(x_0,f(x_0))} \Sigma)^\perp$ then $\pi((x_0,f(x_0))+ v) = (x_0,f(x_0))$ and thus
\[
 d((x_0,f(x_0))+\eps v) = \eps \underbrace{\langle v, n((x_0,f(x_0)))\rangle}_{\neq 0},
\]
and we conclude that
\[
\frac{d}{d\eps}\Big|_{\eps = 0} d((x_0,f(x_0)) + \eps v)) \neq 0 \quad \forall v \not \in T_{(x_0,f(x_0))} \Sigma.
\]
So, all we actually have to establish is that $(0,\ldots,0,1) \not \in T_{(x_0,f(x_0))} \Sigma$, then $\partial_t d(x,t) \neq 0$ and we are done.

But again, we observe that for each $x \in B(0,\rho)$ we have with the standard vectors $e_i\in \mathbb{R}^d$,
\[T_{(x,f(x))}\Sigma = {\rm span}\left\{(e_i,\partial_i f(x))\right\}_{1\leq i\leq d}.\]
By continuity this again holds for any $x \in \overline{B(0,\rho)}$. In particular $(0,\ldots,0,1) \not \in T_{(x_0,f(x_0))} \Sigma$ and we can conclude.
\end{proof}

The control that we need to apply \Cref{la:ift}, and which thus by a standard method of continuity concludes the proof of \Cref{th:uniformbilipschitzparam} and \Cref{th:uniformbilipschitzparam2} is the following:

%

\begin{lemma}\label{la:unifparam1}
Let $\Lambda > 0$, $s \in (0,1)$ and $p > \frac{d}{s}$. Then the following holds.

Assume that $\Sigma$ is a smooth $d$-manifold and that   $f: B(0,\rho) \to \R$ is a smooth map with $f(0) = 0$, $Df(0) = 0$, and set
\[
 \Phi(x) := (x,f(x). )
\]
Then whenever $\Phi: B(0,\rho) \to \R^{d+1}$ is a parametrization of $\Sigma$, we have
\[
\brac{\int_{B(0,\rho)} \brac{\int_{B(0,\rho)} \frac{\abs{f(x)-f(y)-Df(y) (x-y)}}{|x-y|^{d+1+s}}\, dy}^p\, dx}^{\frac{1}{p}} \aleq (1+ \|\nabla f\|_{L^\infty(B(0,\rho))})^\tau (\mathcal B_{s,p}(\Sigma))^{1/p}.
\]
where $\tau = d+2+s$.
\end{lemma}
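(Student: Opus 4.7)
The plan is to relate the ``tilted-plane remainder'' $|f(x)-f(y)-Df(y)(x-y)|$ directly to the geometric quantity $|\langle X-Y,n(Y)\rangle|$ evaluated on the graph, and then enlarge the inner and outer domains of integration from $B(0,\rho)$ to all of $\Sigma$, incurring only losses controlled by $\|\nabla f\|_{L^\infty}$.

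First, with $\Phi(x)=(x,f(x))$, the outward unit normal to $\Sigma$ at $Y=\Phi(y)$ is, up to sign,
\[
n(Y)=\frac{(-\nabla f(y),1)}{\sqrt{1+|\nabla f(y)|^2}},
\]
and the surface element is $d\mathcal{H}^d(Y)=\sqrt{1+|\nabla f(y)|^2}\,dy$. A direct computation then gives the key identity
\[
\langle X-Y,n(Y)\rangle=\frac{f(x)-f(y)-Df(y)(x-y)}{\sqrt{1+|\nabla f(y)|^2}},
\]
so that the Jacobian from the change of variables precisely cancels the denominator, yielding
\[
\int_{B(0,\rho)}\frac{|f(x)-f(y)-Df(y)(x-y)|}{|X-Y|^{d+1+s}}\,dy=\int_{\Phi(B(0,\rho))}\frac{|\langle X-Y,n(Y)\rangle|}{|X-Y|^{d+1+s}}\,d\mathcal{H}^d(Y).
\]

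Next, I would replace $|x-y|$ by $|X-Y|$ in the denominator, using that
\[
|x-y|\le|X-Y|\le\sqrt{1+\|\nabla f\|_{L^\infty}^2}\,|x-y|,
\]
which costs a multiplicative factor $(1+\|\nabla f\|_{L^\infty})^{d+1+s}$ on the right-hand side. Combined with the identity above, this bounds the inner integral over $B(0,\rho)$ with the ``Euclidean'' kernel $|x-y|^{-(d+1+s)}$ by the inner integral over $\Phi(B(0,\rho))\subset \Sigma$ with the ``intrinsic'' kernel $|X-Y|^{-(d+1+s)}$, times that factor.

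Now I would simply enlarge both domains. Since the integrand is non-negative, extending from $\Phi(B(0,\rho))$ to $\Sigma$ only increases the inner integral, producing the integrand of $\mathcal{B}_{s,p}(\Sigma)$ at $X=\Phi(x)$. After raising to the power $p$ and integrating in $x$, I would change variables in $x$ as well: $dx\le\sqrt{1+|\nabla f(x)|^2}\,dx=d\mathcal{H}^d(\Phi(x))$ and extend $\Phi(B(0,\rho))$ to $\Sigma$, so that the outer integral is controlled by $c_{s,p}^{-1}\mathcal{B}_{s,p}(\Sigma)$. Taking the $p$-th root and absorbing the constant $c_{s,p}$ into the implicit $\lesssim$, I obtain a bound of the form
\[
(1+\|\nabla f\|_{L^\infty}^2)^{(d+1+s)/2}\,(\mathcal{B}_{s,p}(\Sigma))^{1/p}\le(1+\|\nabla f\|_{L^\infty})^{d+1+s}\,(\mathcal{B}_{s,p}(\Sigma))^{1/p},
\]
which is stronger than the stated exponent $\tau=d+2+s$.

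The argument is essentially bookkeeping once the identity for $\langle X-Y,n(Y)\rangle$ is in hand, and the main (only) subtle point is ensuring that the monotonicity arguments -- enlarging the inner domain from $\Phi(B(0,\rho))$ to $\Sigma$, and the outer domain from $dx$ on $B(0,\rho)$ to $d\mathcal{H}^d$ on $\Sigma$ -- really do go in the direction we need. Both do, because the integrand is pointwise non-negative and the surface element dominates the flat element. Note that neither the assumption $p>d/s$ nor the normalization $f(0)=0$, $Df(0)=0$ is actually used in this estimate; they enter only later, when this local control is converted via Morrey embedding into a Hölder estimate on $\nabla f$.
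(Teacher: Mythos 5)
Your proof is correct and follows the same basic strategy as the paper's: parametrize by the graph, use the explicit formula for $n(Y)$ to identify $\langle X-Y,n(Y)\rangle$ with the first-order Taylor remainder of $f$, restrict the double integral in $\mathcal{B}_{s,p}$ to the graph piece and compare $|X-Y|$ with $|x-y|$. Your bookkeeping is in fact slightly sharper: by letting the surface-measure Jacobian $\sqrt{1+|\nabla f(y)|^2}$ cancel the normalization factor in $n(Y)$, you obtain the exponent $d+1+s$, whereas the paper discards the Jacobian via $\Jac(\Phi)\geq 1$ and then pays an extra power of $(1+\|\nabla f\|_{L^\infty})$ to pull $\sqrt{1+|\nabla f(y)|^2}$ out of the numerator, arriving at $\tau=d+2+s$; since the lemma only asserts the weaker bound, both versions establish it.
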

\begin{proof}
Set $B=B(0,\rho)$.
Since $\Phi: B \to \R^{d+1}$ is a parametrization of $\Sigma$, identifying by an abuse of notation $n(y) = n(\Phi(y))$,
\[
\begin{split}
 \mathcal{B}_{s,p}(\Sigma) &=c_{s,p}\int_{\Sigma} \abs{\int_{\Sigma} \frac{\abs{\langle x-y, n(y) \rangle}}{|x-y|^{d+1+s}}\, d\mathcal{H}^d(y)}^p\, d\mathcal{H}^d(x)\\
 &\geq\int_{\Phi(B)} \abs{\int_{\Phi(B)} \frac{\abs{\langle x-y, n(y) \rangle}}{|x-y|^{d+1+s}}\, d\mathcal{H}^d(y)}^p\, d\mathcal{H}^d(x)\\
&=\int_{B} \abs{\int_{B} \frac{\abs{\langle \Phi(x)-\Phi(y), n(y) \rangle}}{|\Phi(x)-\Phi(y)|^{d+1+s}}\, \Jac(\Phi(y))d\mathcal{H}^d(y)}^p\, \Jac(\Phi(x))d\mathcal{H}^d(x).
\end{split}
 \]

Observe that
\[
 \Jac(\Phi) \ageq 1,
\]
and
\[
 |\Phi(x)-\Phi(y)| \leq \|D\Phi\|_{L^\infty(B)} |x-y| \aleq \left(1+\|\nabla f\|_{L^\infty(B)}\right)\, |x-y|.
\]
Thus we have
\begin{align}\begin{split}\nonumber
 \mathcal{B}_{s,p}(\Sigma)
 \ageq(1+\|\nabla f\|_{L^\infty(B)})^{-p(d+s+1)} \int_{B} \abs{\int_{B} \frac{\abs{\langle \Phi(x)-\Phi(y), n(y) \rangle}}{|x-y|^{d+1+s}}\, d\mathcal{H}^d(y)}^p\, d\mathcal{H}^d(x),
 \end{split}
 \end{align}
and hence
\begin{equation}\label{eq:sc:23241}
 \int_{B} \abs{\int_{B} \frac{\abs{\langle \Phi(x)-\Phi(y), n(y) \rangle}}{|x-y|^{d+1+s}}\, dy}^p\, dx \aleq \left(1+\|\nabla f\|_{L^\infty(B)}\right)^{p(d+s+1)}  \mathcal{B}_{s,p}(\Sigma).
 \end{equation}
Since $\partial_\alpha \Phi(y) \perp n(y)$ we have
\begin{equation}\label{eq:sc:23243}
\begin{split}
&\int_{B} \abs{\int_{B} \frac{\abs{\langle \Phi(x)-\Phi(y), n(y) \rangle}}{|x-y|^{d+1+s}}\, dy}^p\, dx
=\int_{B} \abs{\int_{B} \frac{\abs{\langle \Phi(x)-\Phi(y)-\partial_\alpha \Phi(y)(x-y)^\alpha, n(y) \rangle}}{|x-y|^{d+1+s}}\, dy}^p\, dx.
\end{split}
\end{equation}
On the other hand, since $\Phi(x) = (x,f(x))$, we have

\[ \brac{\Phi(x)-\Phi(y)-\partial_\alpha \Phi(y)(x-y)^\alpha}^i = (x-y)^i-(x-y)^i = 0, \quad i \in \{1,\ldots,d\}.
\]
Moreover
\begin{equation}\label{eq:normalgraph}
 n(y) = \pm \frac{1}{\sqrt{1+|\nabla f(y)|^2}} \left ( \begin{array}{c}
                                                 \nabla f(y)\\
                                                 -1
                                                \end{array} \right )
\end{equation}
and thus
\begin{equation}\label{eq:sc:23242}
\begin{split}
&\langle \Phi(x)-\Phi(y)-\partial_\alpha \Phi(y)(x-y)^\alpha, n(y) \rangle\\
=~&\mp \frac{1}{\sqrt{1+|\nabla f(y)|^2}} (\Phi(x)-\Phi(y)-\partial_\alpha \Phi(y)(x-y)^\alpha)^{d+1}\\
=~&\mp \frac{1}{\sqrt{1+|\nabla f(y)|^2}} (f(x)-f(y)-\partial_\alpha f(y)(x-y)^\alpha).
\end{split}
\end{equation}
Combining \eqref{eq:sc:23241}, \eqref{eq:sc:23243}, and \eqref{eq:sc:23242} we arrive at
\begin{align}\begin{split}\nonumber
 &\int_{B} \abs{\int_{B} \frac{\abs{ (f(x)-f(y)-\partial_\alpha f(y)(x-y)^\alpha)}}{\sqrt{1+|\nabla f(y)|^2}\,|x-y|^{d+1+s}}\, dy}^p\, dx
 \aleq \left(1+\|\nabla f\|_{L^\infty(B)}\right)^{p(d+s+1)}  \mathcal{B}_{s,p}(\Sigma)
\end{split}\end{align}
and consequently,
\begin{align}\begin{split}\nonumber
 &\brac{\int_{B} \abs{\int_{B} \frac{\abs{(f(x)-f(y)-\partial_\alpha f(y)(x-y)^\alpha)}}{|x-y|^{d+1+s}}\, dy}^p\, dx}^{\frac{1}{p}}
 \aleq \left(1+\|\nabla f\|_{L^\infty(B)}\right)^{d+2+s} \brac{\mathcal{B}_{s,p}(\Sigma)}^{\frac{1}{p}}.
\end{split}\end{align}

\end{proof}

\begin{lemma} \label{la:BetterControl}
Let $\Lambda>0$, $s\in (0,1)$ and $p>\frac ds$. Then there is a radius $R>0$ depending only on $\Lambda, d,s$ and $p$ such that for all $0< \rho < R$ the following holds:
Let $\Sigma$ be a closed smooth $d$-manifold, such that
\[
  \mathcal{B}_{s,p}(\Sigma) \equiv c_{s,p} \int_{\Sigma} \brac{\int_{\Sigma} \frac{\abs{\langle x-y, n(y) \rangle}}{|x-y|^{d+1+s}}\, d\mathcal{H}^d(y)}^p d \mathcal{H}^d(x)\leq \Lambda^p.
\]
 Let us furthermore assume that  for any $x_0 \in \Sigma$ there exists a rotation $P \in SO(N)$ and a smooth map
\[
 f: B(0,\rho) \to \R
\]
with $f(0)=0, \nabla f (0) =0$, and
\[
 \|\nabla f\|_{L^{\infty}(B(0,\rho))} \leq 1,
\]
so that $x_0+P(x,f(x))$ is a local parametrization of $\Sigma$, that is
\[
 \Phi (x) := x_0 + P(x,f(x))
\]
is a diffeomorphism as a map $B(0,\rho) \to \Sigma$.
Then,
$$\|\nabla f\|_{L^\infty(B(0,\rho))} \aleq  \Lambda \rho^{s-d/p} \leq \frac 1 2.$$
\end{lemma}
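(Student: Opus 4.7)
The plan is to rescale to the unit ball, apply \Cref{la:unifparam1} together with the natural scaling of the Taylor-remainder seminorm, upgrade the resulting $L^p$-type bound to a Hölder estimate on $\nabla g$ via a fractional Sobolev--Morrey embedding, and conclude using $\nabla g(0)=0$.

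First I would set $g(x) := \rho^{-1} f(\rho x)$ for $x \in B(0,1)$, so that $g(0) = \nabla g(0) = 0$ and $\|\nabla g\|_{L^\infty(B(0,1))} = \|\nabla f\|_{L^\infty(B(0,\rho))} \leq 1$. Writing
\[ [h]_r := \left(\int_{B(0,r)} \left(\int_{B(0,r)} \frac{|h(x)-h(y)-\nabla h(y)(x-y)|}{|x-y|^{d+1+s}}\,dy\right)^p dx\right)^{1/p},\]
a direct change of variables gives the scaling identity $[g]_1 = \rho^{s-d/p}\,[f]_\rho$. Combining this with \Cref{la:unifparam1} and $\|\nabla f\|_{L^\infty} \leq 1$ yields $[g]_1 \aleq \rho^{s-d/p}\,\Lambda$.

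The next step is to convert this bound into a Hölder estimate on $\nabla g$. The seminorm $[h]_1$ is, modulo bounded-domain technicalities, equivalent to the Triebel--Lizorkin $F^{1+s}_{p,1}(B(0,1))$ seminorm. Since $sp > d$ and both $[h]_1$ and $[\nabla h]_{C^{s-d/p}}$ annihilate affine functions, a scale-invariant form of the embedding $F^{1+s}_{p,1} \hookrightarrow C^{1,s-d/p}$ (so that no additive $L^p$-correction of $g$ is needed) gives $[\nabla g]_{C^{s-d/p}(B(0,1))} \aleq [g]_1 \aleq \rho^{s-d/p}\,\Lambda$. Using $\nabla g(0) = 0$, for any $x \in B(0,1)$,
\[ |\nabla g(x)| = |\nabla g(x)-\nabla g(0)| \leq [\nabla g]_{C^{s-d/p}(B(0,1))}\,|x|^{s-d/p} \aleq \rho^{s-d/p}\,\Lambda.\]
Undoing the rescaling produces $\|\nabla f\|_{L^\infty(B(0,\rho))} \aleq \Lambda\,\rho^{s-d/p}$, and since $s-d/p > 0$ one chooses $R = R(\Lambda,d,s,p)$ small enough that $C\Lambda R^{s-d/p} \leq 1/2$.

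The main obstacle is the scale-invariant Morrey embedding on the bounded domain, without an additive $L^p$-correction. One option is to invoke Triebel--Lizorkin theory together with a universal extension operator (e.g.\ Rychkov's) and the classical $F^{1+s}_{p,1}(\R^d) \hookrightarrow C^{1,s-d/p}(\R^d)$. A more hands-on alternative is to argue directly: for $y_1, y_2 \in B(0,1/2)$ the difference
\[ x \mapsto \bigl(g(x)-g(y_1)-\nabla g(y_1)(x-y_1)\bigr) - \bigl(g(x)-g(y_2)-\nabla g(y_2)(x-y_2)\bigr)\]
is affine in $x$ with leading coefficient $A := \nabla g(y_2)-\nabla g(y_1)$; integrating this identity against $x - (y_1+y_2)/2$ over the ball $B((y_1+y_2)/2, |y_1-y_2|)$ isolates $A$ up to a factor $\sim |y_1-y_2|^{d+2}$, and Hölder's inequality applied against the $L^p$ control on the Taylor-remainder integrand from the previous step then produces $|A| \aleq |y_1-y_2|^{s-d/p}\,[g]_1$, which is exactly what is needed.
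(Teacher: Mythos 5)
Your rescaling to the unit ball and the identity $[g]_1 = \rho^{s-d/p}[f]_\rho$ are correct, as is the use of \Cref{la:unifparam1} to conclude $[g]_1 \aleq \Lambda\rho^{s-d/p}$. The genuine gap is the claim that this controls $[\nabla g]_{C^{s-d/p}}$ \emph{on the full ball} $B(0,1)$. The paper's Morrey--Sobolev embedding, \Cref{la:morreysob}, is an interior estimate: it only controls $[Df]_{C^{s-d/p}}$ on the concentric ball of radius $\tfrac{3}{4}\rho$. Your ``hands-on alternative'' has the same limitation by construction — you explicitly restrict to $y_1,y_2 \in B(0,1/2)$ — so at best it yields $\|\nabla f\|_{L^\infty(B(0,\rho/2))} \aleq \Lambda\rho^{s-d/p}$, which is strictly weaker than the statement. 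This is not a cosmetic loss: \Cref{la:BetterControl} feeds a continuity argument in the proof of \Cref{th:uniformbilipschitzparam2} where the graph must remain controlled up to the boundary of $B(0,\rho)$, exactly so that \Cref{la:ift} can push $\rho$ further; an interior estimate does not close that loop.

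The paper bridges this gap with a re-centering argument that your proposal omits entirely and that is the heart of the proof. For a point $y \in B(0,\rho)\setminus B(0,\tfrac34\rho)$, one uses the hypothesis that \emph{every} point of $\Sigma$ — in particular $\tilde x_0 := (x_1,f(x_1))$ with $x_1 = \tfrac{3\rho}{4}\,y/\|y\|$ — admits a graph parametrization over $B(0,\rho)$ with gradient vanishing at the center. The interior estimate applied in this new chart, together with the fact that $(y,f(y))$ lies in the image of the smaller ball of that chart, and a comparison of normal projections $P^\perp_0$, $P^\perp_{(x_1,f(x_1))}$, $P^\perp_{(y,f(y))}$ via the Lipschitz dependence of the Gauss map on the gradient, produces $|\nabla f(y)| \aleq \Lambda\rho^{s-d/p}(1+|\nabla f(y)|)$, which self-improves once $\rho$ is small. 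Your alternative route — a seminorm-to-seminorm, boundary-to-boundary embedding $F^{1+s}_{p,1}(B) \hookrightarrow C^{1,s-d/p}(\overline B)$ via Rychkov extension modulo affine functions — is plausible in principle, but as written it is an assertion, not an argument: you would need to check that the extension operator is bounded on the homogeneous/quotient space (or supply a Poincar\'e-type reduction to the inhomogeneous case on $B(0,1)$), and that the constants are independent of $\rho$. Either carry that out carefully, or restore the re-centering step; as it stands the proposal does not prove the stated full-ball bound.
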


\begin{proof}
    Let us assume without loss of generality that $x_0=0$. Lemma \ref{la:unifparam1} tells us that
    
    \begin{align*}
&\brac{\int_{B(0,\rho)} \brac{\int_{B(0,\rho)} \frac{\abs{f(x)-f(y)-Df(y) (x-y)}}{|x-y|^{d+1+s}}\, dy}^p\, dx}^{\frac{1}{p}} 
\\ & \qquad \aleq (1+ \|\nabla f\|_{L^\infty(B(0,\rho))})^\sigma (\mathcal B_{s,p}(\Sigma))^{1/p} \\& \qquad \aleq \Lambda. 
\end{align*}
Let for simplicity $B = B(0,\rho)$. From Morrey-Sobolev embedding, \Cref{la:morreysob}, since $sp > d$
for $\sigma = s-\frac{d}{p}$ 
\[
 [Df]_{C^{\sigma}(B(0,\frac 34 \rho))} \aleq_{s,p} \brac{\int_{B}\brac{ \int_{B} \frac{\abs{f(x)-f(y)-Df(y) (x-y)}}{|x-y|^{d+1+s}}\, dy}^p\, dx}^{\frac{1}{p}}.
\]
Since by assumption $Df(0) = 0$, this implies 
\[
 \|\nabla f\|_{L^\infty(B(0, \tfrac 34 \rho))} \aleq \rho^{\sigma}\, \brac{\int_{B}\brac{ \int_{B} \frac{\abs{f(x)-f(y)-Df(y) (x-y)}}{|x-y|^{d+1+s}}\, dy}^p\, dx}^{\frac{1}{p}}
\]
and hence
\[
 \|\nabla f\|_{L^\infty(B(0, \tfrac 34 \rho))} \aleq \rho^{\sigma} \Lambda.
\]
Using this, we can already achieve that the gradient of $f$ is as small as we wish on the smaller ball $B(0,\tfrac 3 4 \rho)$. 

To get control over the gradient of $f$ on the complete ball $B(0,\rho)$, for a point $y \in B(0,\rho) \setminus B(0,\tfrac 34 \rho)$ we consider  $x_1 = \frac{3\rho}{4} \cdot \frac y {\|y\|}$ on $\partial B(0, \frac 3 4 \rho)$.  Then
$$ |\nabla f (x_1)| \aleq \rho^{\sigma} \Lambda.$$
Furthermore, we get using that the Lipschitz constant of $f$ on $B$ is bounded by one that  
$$
 d_\Sigma((x_1,f(x_1)), (y,f(y))) \leq \frac 1 4 \sqrt{2} \rho < \frac 34 \rho,
$$
where $d_\Sigma$ denotes the distance along the surface $\Sigma$.

Our assumption applied to the point $\tilde x_0= (x_1, f(x_1)) \in \Sigma$ now tells us that  there exists a rotation $\tilde P \in SO(N)$ and a smooth map
\[
 \tilde f: B(0,\rho) \to \R
\]
with $\tilde f(0)=0, \nabla \tilde f (0) =0$, and
\[
 \|\nabla \tilde f\|_{L^{\infty}(B(0,\rho))} \leq 1,
\]
so that $\tilde x_0+\tilde P(x,\tilde f(x))$ is a local parametrization of $\Sigma$, that is
\[
 \tilde \Phi (x) := \tilde x_0 + \tilde P(x,\tilde f(x))
\]
is a diffeomorphism onto its image as a map $B(0,\rho) \to \Sigma$.  As above we get that
$$
\|\nabla \tilde f \|_{L^\infty (B(0,\tfrac 3 4 \rho)} \aleq \rho^{\sigma} \Lambda
$$ and since $d_{\Sigma}(\tilde x_0 ,(y,f(y))) < \frac 34 \rho$ we get that 
$$
 (y,f(y)) \in \tilde \Phi(B(0,\tfrac 34 \rho))
$$
i.e. there is a point $\tilde y \in B(0, \frac 3 4 \rho)$ such that 
$$(y, f(y)) = \tilde x_0 + \tilde P (\tilde y, \tilde f (\tilde y)).$$
We will now deduce from this, that indeed 
$$
| \nabla f (y) | \aleq \rho^{{\sigma}} \Lambda \leq R^{\sigma} \Lambda \leq \frac 1 2
$$
whenever $R>0$ is sufficiently small.

To this end let us denote by 
$$
P^T_x: \R^{d+1} \rightarrow T_x \Sigma, \quad P^\bot_x: \R^{d+1} \rightarrow N_x \Sigma 
$$
the orthogonal projection onto the tangent space $T_x \Sigma$  and the normal space $N_x \Sigma$ along $\Sigma$ in the point $x \in \Sigma$ respectively.

We have
$$
 P_x^\bot(v) = \langle v, n \rangle n
$$
where $n=n(x)$ denotes a unit normal and hence can be considered as a Lipschitz function from the unit vectors onto the set of orthogonal projection equipped with the operator norm
.
If $\Sigma$ is the graph of a function $g$ around the point we consider, we obtain a local Gauß-map setting
$$
n = \frac {(\nabla g(x), - 1)} {\sqrt{|\nabla g(x)^2 +1 |}},
$$
i.e. the unit normal can be considered as a Lipschitz map of  the gradient under the condition that the gradient is bounded by $1$ as in our case.

Applying this first to $f$ and the points $0$ and $x_1$  and then to $\tilde f $ and the points $0$ and $\tilde y$ we get
\begin{align*}
| P^\bot_0 - P^\bot_{(y,f(y))} | & \leq | P^\bot_0 - P^\bot_{(x_1,f(x_1))} |+  |P^\bot_{(x_1,f(x_1))} - P^\bot_{(y,f(y))}|
\\ & \aleq |\nabla f(0) - \nabla f (x_1)| + |\nabla \tilde f(0) - \nabla \tilde f (\tilde y)| 
\\ & \aleq \rho^{\sigma} \Lambda.
\end{align*}
But from this we obtain 
\begin{align*}
|f(x)-f(y)| & = |P_0^\bot (x-y, f(x) - f(y))|  
\\ &\leq  |P_{(y,f(y))}^\bot (x-y, f(x) - f(y))| 
\\ & \quad + |P^\bot_{(y,f(y))}  - P^\bot _0 | |(x-y,f(x)-f(y))|.
\end{align*}

If we divide by $|x-y|$ let $x \rightarrow y$ and observe that  then 
\[P_{(y,f(y))}^\bot ( \tfrac 1 {|x-y} (x-y, f(x) - f(y))) \rightarrow 0,\] we get
$$
| \nabla f (y)| \aleq \Lambda \rho^{\sigma} \sqrt{1+ |\nabla f(y)|^2} \leq \Lambda \rho^{\sigma} (1+ |\nabla f(y)|).
$$
If   $R$ is small enough, this gives 
$$
 |\nabla f(y)| \leq C \Lambda \rho^{\sigma} \leq C\Lambda R^{\sigma} \leq \frac 12. 
$$
\end{proof}

\begin{proof}[Proof of Theorem \protect{\ref{th:uniformbilipschitzparam2}}]
We chose $\tilde \rho_0 > 0$  maximal with the property that for all $\rho < \tilde \rho_0$ the assumption of \Cref{la:BetterControl} are satisfied. Let us assume that $\tilde \rho_0 < R$ where again $R >0$ is the radius obtained from \Cref{la:BetterControl}.  Keeping in mind that $\Sigma$ is a compact $C^1$ - submanifold, the assumptions of Lemma \ref{la:BetterControl} are then also satisfied for $\rho = \tilde{\rho}_0$.

Then the previous lemma implies that around all points $x_0 \in \Sigma$  we can write $\Sigma$  up to translations and rotations as graphs of $C^{1,\sigma}$ - functions on $B(0, \tilde \rho_0)$ whose gradient is bounded by $\frac 12$. 
Since the submanifold $\Sigma$  is $C^1$ and compact, and hence the unit normals are uniformly continuous, we can combine Lemma \ref{la:ift} with a continuity argument, to prove that $\tilde \rho_0>0$ was not maximal. 
\end{proof}

\section{Proof of Michael-Simon type inequality, Corollary~\ref{bt-5612}}
In view of the equivalence of $\mathcal{B}_{s,q}(\Sigma)$ and $\mathcal{W}_{s,q}(\Sigma)$ for convex $\Sigma$, \Cref{bt-5612} is a consequence of the following
\begin{corollary}[Weak Michael-Simon type inequality]\label{bt-56122}
Let $p> \frac{d}{s}$, $s \in (0,1)$, $\Lambda > 0$, $\alpha \in (0,1]$ and $q \in (1,\infty)$. Then there exists a constant $C=C(p,d,s,\alpha, q, \Lambda)$ such that the following holds.

Let $\Sigma = \Sigma^d = \partial \Omega$ be the smooth boundary of a compact body in $\mathbb{R}^{d+1}$, such that
\[
 \mathcal{B}_{s,p}(\Sigma) \leq \Lambda.
\]

Then for any $f \in W^{\alpha,q}(\Sigma)$ with $q \in (1,\frac{d}{\alpha})$ we have
\begin{equation}
\|f\|_{L^{\frac{dq}{d-\alpha q}}(\Sigma)} \leq C\brac{ [f]_{W^{\alpha,q}(\Sigma)} + \|f\|_{L^q(\Sigma)}}.
\end{equation}
If $\alpha-\frac{d}{q} = \beta$ for some $\beta \in (0,\alpha)$ then
\[
\|f\|_{L^\infty(\Sigma)} + [f]_{C^{\beta}(\Sigma)} \leq C \brac{[f]_{W^{\alpha,q}(\Sigma)} + \|f\|_{L^q(\Sigma)}}.
\]
\end{corollary}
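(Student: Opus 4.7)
The plan is to reduce \Cref{bt-56122} to the classical (fractional) Sobolev/Morrey embedding on a Euclidean ball $B(0, R) \subset \R^d$, using the uniform local graph parametrization supplied by \Cref{th:uniformbilipschitzparam2}. Apply that theorem to obtain $R = R(\Lambda, p, s, d) > 0$ such that around every $x_0 \in \Sigma$ there is a bi-Lipschitz graph parametrization $\Phi_{x_0}: B(0, R) \to \Sigma$ with uniform $C^{1, s - d/p}$-control on the height function. Consequently, the induced metric, Jacobian, and surface distances on each chart are comparable to the flat Euclidean ones, with constants depending only on $\Lambda, p, s, d$.

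Following the authors' hint, cover $\R^{d+1}$ by a cubic lattice of balls $\{B(z_j, r/2)\}$ of a smaller radius $r \ll R$ with ambient bounded overlap $C_0 = C_0(d)$, together with a subordinate smooth partition of unity $\{\eta_j\}$ with $\eta_j \equiv 1$ on $B(z_j, r/2)$, $\supp \eta_j \subset B(z_j, r)$, and $\|\eta_j\|_{W^{\alpha, \infty}}$ bounded in terms of $r$. Whenever $\Sigma \cap B(z_j, r)$ is nonempty, each of its connected components lies in a graph patch $\Phi_{x_j^\beta}(B(0, R))$ for some base point $x_j^\beta \in \Sigma \cap B(z_j, r)$; the number of such components per ball is bounded uniformly in $j$ by the bi-Lipschitz control (via a packing estimate).

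On each such patch, pull $f \eta_j$ back to $B(0, R) \subset \R^d$ via $\Phi_{x_j^\beta}$. By the uniform bi-Lipschitz bounds, the $W^{\alpha, q}$, $L^q$ and $L^{q^*}$ norms are equivalent before and after the pullback, and the classical fractional Sobolev embedding on the Lipschitz domain $B(0, R)$ gives
\[
\|f \eta_j\|_{L^{q^*}(\Sigma)} \leq C\bigl([f \eta_j]_{W^{\alpha, q}(\Sigma)} + \|f \eta_j\|_{L^q(\Sigma)}\bigr) \leq C\bigl([f]_{W^{\alpha, q}(\supp \eta_j \cap \Sigma)} + \|f\|_{L^q(\supp \eta_j \cap \Sigma)}\bigr),
\]
where $q^* := dq/(d - \alpha q)$ and the second step is the standard fractional Leibniz rule. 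Raising to the $q^*$-th power and summing over $j$, using the pointwise bound $|f|^{q^*} \leq C_0^{q^*-1} \sum_j |f \eta_j|^{q^*}$ (Jensen's inequality with $\sum_j \eta_j \geq 1$ and $C_0$-fold overlap), the elementary estimate $\sum_j a_j^{q^*/q} \leq (\sum_j a_j)^{q^*/q}$ (valid since $q^* \geq q$), together with the $C_0$-fold bounded overlap of the supports, yields the first inequality $\|f\|_{L^{q^*}(\Sigma)} \leq C ([f]_{W^{\alpha, q}(\Sigma)} + \|f\|_{L^q(\Sigma)})$.

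The Morrey regime $\alpha q > d$ is analogous: the local Morrey embedding on each chart gives $\|\cdot\|_{L^\infty} + [\cdot]_{C^\beta}$ bounds, and taking maxima over charts produces the global $L^\infty$ bound. For the Hölder seminorm, pairs $x, y \in \Sigma$ with $|x - y|$ below a fixed fraction of $R$ lie in a common chart by the uniform bi-Lipschitz control, while pairs with larger separation are handled by $|f(x) - f(y)| \leq 2 \|f\|_{L^\infty(\Sigma)}$ divided by a positive power of $R$. The main nontrivial point is ensuring the bounded overlap and bounding the number of connected components per ambient ball, both of which follow from the uniform bi-Lipschitz constants in \Cref{th:uniformbilipschitzparam2}; the rest is a routine transfer from Euclidean Sobolev.
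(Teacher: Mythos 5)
Your approach is essentially the paper's: use the uniform $C^{1,\sigma}$ graph parametrization from \Cref{th:uniformbilipschitzparam2}, cover by ambient balls $B_j$ of radius comparable to $R$ with bounded overlap, transfer to the Euclidean Sobolev/Morrey embedding chartwise, and reassemble. The only implementation difference is in the reassembly: in the Sobolev regime the paper restricts $f$ directly to $\Sigma \cap B_j$ and uses $\ell^1 \hookrightarrow \ell^{q^\ast/q}$ together with the inhomogeneous Sobolev inequality on the pulled-back domain, whereas you multiply by a cutoff $\eta_j$, invoke a fractional Leibniz rule, and use a Jensen-type pointwise estimate. Both routes give the first inequality, and the Morrey case is handled by taking maxima in both.

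One intermediate claim you make is, however, not correct as stated: that ``the number of such components per ball is bounded uniformly in $j$ by the bi-Lipschitz control (via a packing estimate).'' The uniform graph bound controls the geometry of a single strand but does not prevent many near-parallel strands of $\Sigma$ from entering the same small ambient ball. For example, a union of concentric spheres of large radii $R_1 < R_2 < \cdots$ at mutual distances $\eps \ll R$ has arbitrarily small $\mathcal{B}_{s,p}$-energy by the subcritical scaling $\mathcal{B}_{s,p}(\lambda\Sigma) = \lambda^{d-sp}\mathcal{B}_{s,p}(\Sigma)$, yet arbitrarily many sheets pass through a single ball of radius $\sim R$. Precisely this phenomenon is what convexity or the chord-arc bound of \Cref{th:selfrepulsive} (which requires $\mathcal{H}^d(\Sigma)\leq 1$) is designed to preclude, and neither is assumed in \Cref{bt-56122}. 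To your credit, you noticed that several base points $x_j^\beta$ may be needed per ball; the paper's own proof instead quietly asserts $\Phi_j(B(0,R)) \supset B_j \cap \Sigma$, which suffers from the same issue. The clean repair for either argument is to replace the ambient covering by an intrinsic one: take a maximal $R/8$-separated subset $\{y_k\}$ of $\Sigma$ in the intrinsic metric and cover $\Sigma$ by patches $\Phi_k(B(0,R/4))$. For this intrinsic cover the uniform bi-Lipschitz bounds genuinely do yield a bounded overlap number via packing in the parametrizing domain, and no bound on the number of ambient components is needed anywhere.
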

\begin{proof}[Proof of Corollary~\ref{bt-56122}: H\"older case]
Let $R\in (0,+\infty)$ be the $\Lambda$-dependent radius provided in \Cref{th:uniformbilipschitzparam2}. We can cover $\R^{d+1}$ with dyadic cubes $Q_i$ (with pairwise disjoint interior) and of sidelength $2^{-100} R$
\[
 \R^{d+1} = \bigcup_{i \in \Z} Q_i.
\]
Denote $B_j$ the ball with same center as $Q_j$ and radius $2^{-50} R$. Observe that there is a fixed dimensional constant $K$ such that any point in $\R^{d+1}$ is contained in at most $K$ balls $B_j$.

Subordinated to $B_j$ we choose a partition of unity, $\eta_j \in C_c^\infty(B_j)$, $\sum_{j=1} \eta_j(\cdot) = 1$ everywhere in $\R^{d+1}$.
Let  $J:=\left\lbrace j\in \Z| Q_j\cap\Sigma  \neq \emptyset \right\rbrace$.

We have
\[
 \Vert f \Vert_{L^{\infty}(\Sigma)} \leq \max_{j\in J} \Vert f \Vert_{L^{\infty}(\Sigma\cap B_j)}
\]
 and 
 \[
  [f ]_{C^{\beta}(\Sigma)} \aleq_R
 \Vert f \Vert_{L^{\infty}(\Sigma)} + \max_{j} [f ]_{C^{\beta}(\Sigma\cap B_j)}.
 \]

In view of \Cref{th:uniformbilipschitzparam2} we obtain for each $j\in J$ a parametrization
$$\Phi_j:B(0,R) \subset \R^d\to \Sigma $$ 
as provided in the statement of \Cref{th:uniformbilipschitzparam2}, where $\Phi_j(B(0,R))\supset B_j \cap \Sigma$, is a graph, and thus
\[
 |x-y| \aleq |\Phi_j(x)-\Phi_j(y)|.
\]
Thus (set $\beta := \alpha-\frac{d}{q}$).
\[
[\eta_j f ]_{C^{\beta}(\Sigma\cap B_j)} \leq [(\eta_j f) \circ \Phi_j]_{C^\beta(B(0,R))},
\]
and
\[
 \|(\eta_j f)\|_{L^\infty(\Sigma \cap B_j)} \leq \|(\eta_j f)\circ \Phi_j \|_{L^\infty(B(0,R))}.
\]
By Sobolev embedding,
\[
 \|(\eta_j f)\|_{L^\infty(\Sigma \cap B_j)} +
[\eta_j f ]_{C^{\beta}(\Sigma\cap B_j)}  \aleq_{R} [f\circ \Phi_j]_{W^{\alpha,q}(B(0,R))} +\|f\circ \Phi_j \|_{L^q(B(0,R))}.
\]
Since $\Phi_j$ are uniformly Lipschitz, we conclude
\[
\begin{split}
 \|f\|_{L^\infty(\Sigma)} +    [f ]_{C^{\beta}(\Sigma)}
 &\aleq \max_{j} [f]_{W^{\alpha,q}(\Sigma \cap B_j)} + \max_{j} \|f\|_{L^q(\Sigma \cap B_j)}\\
  &\aleq \max_{j} [f]_{W^{\alpha,q}(\Sigma)} + \max_{j} \|f\|_{L^q(\Sigma)}\\
  &= [f]_{W^{\alpha,q}(\Sigma)} + \|f\|_{L^q(\Sigma)}.
\end{split}
 \]

\end{proof}

\begin{proof}[Proof of Corollary~\ref{bt-56122}: Sobolev case]
Fix $\Sigma$ with $\mathcal{B}_{s,p}(\Sigma) \leq \Lambda$.

Our aim is to show
\[
 \brac{\int_{\Sigma} |f|^{q^\ast}}^{\frac{q}{q^\ast}} \aleq_{\Lambda} [f]_{W^{\alpha,q}(\Sigma)}^q + \|f\|_{L^q(\Sigma)}^q.
\]
with a constant independent of $\Sigma$.

%
%
%
%
%
%
%
%


With the same notation as in the previous proof,
in view of \Cref{th:uniformbilipschitzparam2} we obtain that for each for each $j\in J$ there exists a parametrization
$$\Phi_j:B(0,R)\to \Sigma $$
as provided in the statement of \Cref{th:uniformbilipschitzparam2}, where $\Phi_j(B(0,R))\supset B_j \cap \Sigma$.

Since $\frac{q}{q^\ast} < 1$, we have $\ell^1(\N) \subset \ell^{\frac{q^\ast}{q}}(\N) $ and we obtain
\[
\brac{\int_{\Sigma} |f|^{q^\ast}}^{\frac{q}{q^\ast}} \leq \brac{\sum_j \int_{\Sigma\cap B_j} |f|^{q^\ast}}^{\frac{q}{q^\ast}} \leq \sum_{j\in J}
 \|f\|_{L^{q^\ast}(\Sigma \cap B_j)}^{q}.
\]
With the uniform parametrization $\Phi_j: B(0,R) \subset \R^d \to \Sigma$ we have 
\[
 \|f\|_{L^{q^\ast}(\Sigma \cap B_j)} \aleq_{\Lambda} \|f\circ \Phi_j\|_{L^{q^\ast}(\Phi^{-1}_j (B_j))},
\]
By Sobolev embedding, with a constant depending on $\diam \Phi^{-1}_j(B_j)$ (which is controlled by $\Lambda$ due to the bi-Lipschitz control of $\Phi_j$)
\[
 \|f\|_{L^{q^\ast}(\Sigma \cap B_j)} \aleq_{\Lambda} [f\circ \Phi_j]_{W^{\alpha,q}(\Phi^{-1}_j(\Sigma \cap B_j))} + \|f\circ \Phi_j\|_{L^q(\Phi^{-1}_j(\Sigma \cap B_j))}.
\]
Reverting the parametrization $\Phi_j$ we then have 
\[
 \|f\|_{L^{q^\ast}(\Sigma \cap B_j)} \aleq_{\Lambda} [f]_{W^{\alpha,q}(\Sigma \cap B_j)} + \|f\|_{L^q(\Sigma \cap B_j)}
\]
and thus
\[
 \|f\|_{L^{q^\ast}(\Sigma \cap B_j)}^{q} \aleq_{\Lambda} [f]_{W^{\alpha,q}(\Sigma \cap B_j)}^{q} + \|f\|_{L^q(\Sigma \cap B_j)}^{q}.
\]
This implies 
\[
\brac{\int_{\Sigma} |f|^{q^\ast}}^{\frac{q}{q^\ast}} \aleq_{\Lambda} \sum_{j\in J}
 \brac{[f]_{W^{\alpha,q}(\Sigma \cap B_j)}^{q} + \|f\|_{L^q(\Sigma \cap B_j)}^{q}}.
\]
Since each point in $\R^{d+1}$ is covered by a maximal number of balls $B_j$, say $K$ (and $K$ depends only on the dimension) we conclude
\[
\sum_{j} \left([f]_{W^{\alpha,q}(\Sigma \cap B_j)}^{q} + \|f\|_{L^q(\Sigma \cap B_j)}^{q}\right) \leq K \brac{[f]_{W^{\alpha,q}(\Sigma)}^{q} + \|f\|_{L^q(\Sigma)}^{q}},
\]
and hence
\[
\brac{\int_{\Sigma} |f|^{q^\ast}}^{\frac{q}{q^\ast}} \aleq_{\Lambda} [f]_{W^{\alpha,q}(\Sigma)}^{q} + \|f\|_{L^q(\Sigma)}^{q}.
\]
This concludes the proof of Corollary~\ref{bt-56122}.
\end{proof}

\section{Stability for fractional curvature energies}

The energy $\mathcal{B}_{s,p}$ vanishes precisely on planes. However, in analogy with the classical traceless second fundamental form of a surface $\Sigma$, it would also be desirable to have an energy which characterizes the sphere. Inspired by the observation that  the linear map
\[T = A-\tfrac{1}{R}\,\mathrm{id} = D(n(x)-\tfrac{1}{R}x),\quad R>0\]
is zero precisely when $\Sigma$ is umbilic and must be a sphere of radius $R$, we define the quantity

\[\|A-\tfrac{1}{R}\,\mathrm{id}\|^{p}_{p,s,\Sigma} := c_{s,p}\int_{\Sigma}\int_{\Sigma}\frac{|(n(x)-\tfrac{1}{R}x)-(n(y)-\tfrac{1}{R}y)|^{p}}{|x-y|^{d+sp}}\,d\mathcal{H}^d(x)d\mathcal{H}^d(y).\]
This vanishes precisely on spheres of radius $R$. The aim of this section is to obtain a stability version of this rigidity statement, based on our weak Michael-Simon type inequality. Therefore it suffices to consider the normal component of the quantity $n(x) - \tfrac 1R x$, which is
\[1-\tfrac 1R u(x) := \langle n(x)-\tfrac 1R x,n(x)\rangle. \]
We define $R_{0}$ to be the mean-value of $u$, which is positive due to the divergence theorem. Then we obtain the following stability result:

\begin{theorem}
Let $\Sigma \subset \R^{d+1}$ be a closed hypersurface and suppose that {for some $s \in (0,1)$ and $p > \frac{d}{s}$ we have}
\[
 \mathcal{B}_{{s},{p}}(\Sigma) + {\diam(\Sigma)} \leq \Lambda.
\]
Let $\alpha q > d$. Then for any $\eps > 0$ there exists $\delta =\delta(\alpha,q,d,{\eps,\Lambda})> 0$ (otherwise independent of $\Sigma$) such that the following holds:
If
\[
 [u]_{W^{\alpha,q}(\Sigma)} < R_{0}\delta
\]
then
\[
 \dist(\Sigma,\S^d(R_{0})) < \eps,
\]
where 
$\mathbb{S}^{d}(R_{0})$ is a sphere of radius $R_{0}$ and where $\dist$ is the Hausdorff distance.
\end{theorem}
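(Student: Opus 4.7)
The plan is to first upgrade the $W^{\alpha,q}$-seminorm smallness of $u$ to a pointwise smallness of $u-R_0$ on $\Sigma$, then use an elementary computation with the ambient function $\phi(x)=\tfrac12|x|^2$ to confine $\Sigma$ to a thin annulus around $\S^d(R_0)$, and finally exploit the resulting positivity $u>0$ to establish star-shapedness of $\Sigma$ with respect to the origin, which upgrades the annular confinement to a two-sided Hausdorff bound.

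For the $L^\infty$-control, since $\alpha q>d$, the H\"older case of \Cref{bt-5612} applied to $f=u-R_0$ yields
\[
\|u-R_0\|_{L^\infty(\Sigma)} + [u]_{C^{\alpha-d/q}(\Sigma)} \leq C(\Lambda,\alpha,q,d,s,p)\brac{[u]_{W^{\alpha,q}(\Sigma)}+\|u-R_0\|_{L^q(\Sigma)}}.
\]
The $L^q$-term is absorbed by a Poincar\'e inequality on $\Sigma$ with a $\Lambda$-uniform constant, namely $\|u-R_0\|_{L^q(\Sigma)}\aleq [u]_{W^{\alpha,q}(\Sigma)}$, which follows from $\int_\Sigma(u-R_0)\,d\mathcal{H}^d=0$ together with the same covering/partition-of-unity machinery used to prove \Cref{bt-56122}, chaining local Poincar\'e inequalities on the finitely many $C^{1,\sigma}$-graph patches that cover $\Sigma$ by \Cref{th:uniformbilipschitzparam2}. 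Hence $\|u-R_0\|_{L^\infty(\Sigma)}\leq C R_0\delta$.

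For the geometric step, one computes that the tangential gradient $\nabla^\Sigma\phi=x-u(x)\,n(x)$ satisfies $|\nabla^\Sigma\phi|^2=|x|^2-u(x)^2$, which vanishes at the extrema of $|x|$ on compact $\Sigma$, forcing $|x|^2=u(x)^2$ at those extrema. Choosing $\delta$ small enough that $u\geq R_0/2>0$ everywhere, the extremal radii equal $u(x)\in[R_0(1-C\delta),R_0(1+C\delta)]$, so $\Sigma$ lies in the corresponding annulus about $\S^d(R_0)$. The positivity $u>0$ further implies the position vector is never tangent to $\Sigma$, so rays from the origin are transverse to $\Sigma$; a parity argument using $\Sigma=\partial\Omega$ then shows $0\in\Omega$ and that each ray meets $\Sigma$ in exactly one point (a second intersection would produce an outward normal opposite to the ray, contradicting $u>0$). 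Hence $x\mapsto x/|x|$ is a diffeomorphism $\Sigma\to\S^d$, so every point of $\S^d(R_0)$ is attained by $\Sigma$ up to radial error $C R_0\delta$, and
\[
\dist(\Sigma,\S^d(R_0))\leq CR_0\delta\leq C\Lambda\delta,
\]
using $R_0\leq\diam(\Sigma)\leq\Lambda$. The choice $\delta<\eps/(C\Lambda)$ concludes.

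The main obstacle is the uniform Poincar\'e inequality in Step 1: the excerpt proves the weak Michael-Simon embedding but not the Poincar\'e complement. It can be handled either by the explicit covering argument sketched above, or by a soft compactness argument: extracting a $C^{1,\sigma'}$-subsequential limit $\Sigma_\infty$ via \Cref{th:uniformbilipschitzparam2}, a mean-zero $W^{\alpha,q}$-sequence with vanishing seminorms would converge to a constant which must be zero, contradicting a unit-$L^q$ normalization.
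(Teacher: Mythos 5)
Your proof is correct and takes essentially the same approach as the paper: the weak Michael--Simon inequality (\Cref{bt-56122}) together with a Poincar\'e estimate gives $L^\infty$-control of $u-R_0$, which forces $u>0$ and hence star-shapedness, and the annulus/Hausdorff bound follows from the fact that the radius agrees with the support function at radial extrema. Your computation $|\nabla^\Sigma(\tfrac12|x|^2)|^2=|x|^2-u(x)^2$ is just a reformulation of the paper's $u=r/v$ with $v^2=1+r^{-2}|\nabla r|^2$, and the Poincar\'e step you flag as the main obstacle is handled in the paper by writing the constant explicitly in terms of $\mathcal{H}^d(\Sigma)^{-1}$ (bounded below via \Cref{co:areaundercontrol}) and $\diam(\Sigma)\leq\Lambda$, so either of your proposed alternatives is a fine substitute but not strictly needed.
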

\begin{proof}
Apply our weak Michael-Simon type inequality, \Cref{bt-56122} to $R_0-u$,
then we have
\[
 [R_{0}-u]_{C^{\alpha-\frac{d}{q}}(\Sigma)} \aleq  [R_{0}-u]_{W^{\alpha,q}(\Sigma)} + \|R_{0}-u\|_{L^q(\Sigma)} \aleq_{\Lambda} R_{0}\delta,
\]
and due to the Poincar\'e-Sobolev inequality, 
\[
 \|R_{0}-u\|_{L^q(\Sigma)} \aleq \brac{\mathcal{H}^d(\Sigma)}^{-1} \diam(\Sigma)^{\frac{d}{q}+\alpha} [u]_{W^{\alpha,q}(\Sigma)}.
\]
Observe that we have control over $\mathcal{H}^d(\Sigma)$ in terms of $\Lambda$ by \Cref{co:areaundercontrol}.

We conclude that for sufficiently small $\delta$, the support function satisfies
\[u(x)=\langle x,n(x)\rangle >0\]
and hence $\Sigma$ is starshaped around the origin and it may be parametrized over a sphere by the function $r\colon\mathbb{S}^{d}\rightarrow \Sigma. $
Define
\[ v^{2} = 1+ r^{-2}|\nabla r|^{2}, \]
then 
\[\langle x,n(x)\rangle  = \frac{r}{v}\leq r\]
with equality precisely when $\nabla r = 0$. Hence
\[\min_{\mathbb{S}^{d}}\langle x,n(x)\rangle \leq \min_{\mathbb{S}^{d}}r,\quad \max_{\mathbb{S}^{d}}\langle x,n(x)\rangle = \max_{\mathbb{S}^{d}}r.\]
However, $\langle x,n(x)\rangle$ is $R_{0}\delta$-close to $R_{0}$ and hence $\Sigma$ lies within an annulus of size $R_{0}\delta$ around $\mathbb{S}^{d}(R_{0})$.
\end{proof}

\appendix

\section{Some basic inequalities}
\begin{lemma}\label{la:inequ1}
Fix $\Lambda > 1$, $\Upsilon > 0$, $\alpha > 0$, $\sigma > 0$.
Assume $\rho <  \brac{\frac{1}{2^\sigma \Lambda \Upsilon}}^{\frac{1}{\alpha}}$, then the following holds.

Let $f: [0,\rho] \to [0,\infty)$ be any nonnegative continuous function with $f(0) = 0$ and such that
\[
 f(r) \leq \Lambda (1+r^\alpha f(r))^\sigma\, \Upsilon \quad \forall r \in (0,\rho),
\]
then $\sup_{r \in [0,\rho]} f(r) \leq 2^\sigma \Lambda \Upsilon$.
\end{lemma}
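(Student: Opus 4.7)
The plan is a standard continuity/bootstrap argument exploiting the self-improving nature of the hypothesis. I would define
\[ T := \{r \in [0,\rho] : f(r) \leq 2^\sigma \Lambda \Upsilon\}. \]
Then $0 \in T$ because $f(0) = 0 < 2^\sigma \Lambda \Upsilon$, and $T$ is closed in $[0,\rho]$ by continuity of $f$. The goal is to show $T$ is also relatively open, so that by connectedness of $[0,\rho]$ one concludes $T = [0,\rho]$.

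The core step is a self-improvement computation: for any $r \in T$, the smallness assumption $\rho^\alpha < \tfrac{1}{2^\sigma \Lambda \Upsilon}$ gives
\[ r^\alpha f(r) \leq \rho^\alpha \cdot 2^\sigma \Lambda \Upsilon < 1, \]
and feeding this back into the standing inequality yields the \emph{strict} bound
\[ f(r) \leq \Lambda (1 + r^\alpha f(r))^\sigma \Upsilon < \Lambda \cdot 2^\sigma \cdot \Upsilon = 2^\sigma \Lambda \Upsilon. \]
By continuity of $f$, this strict inequality persists on some relatively open neighborhood of $r$ in $[0,\rho]$, which shows $T$ is open. Combined with closedness and the presence of $0$, this finishes the argument.

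There is essentially no obstacle: the smallness of $\rho$ is exactly calibrated so that the exponent $(1 + r^\alpha f(r))^\sigma$ is bounded by $2^\sigma$ whenever $f(r)$ sits below the target, and the initial condition $f(0) = 0$ provides the anchor that prevents the bootstrap from being vacuous. One could alternatively phrase the argument via $r_* := \sup T$ and derive a contradiction from $r_* < \rho$ using the same strict inequality, but the open-closed-connected formulation is the cleanest.
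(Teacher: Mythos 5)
Your proof is correct and takes essentially the same approach as the paper: a continuity/connectedness argument on the set $T=\{r\in[0,\rho]: f(r)\le 2^\sigma\Lambda\Upsilon\}$, anchored at $r=0$, with openness coming from the self-improving strict inequality $f(r)\le\Lambda(1+r^\alpha f(r))^\sigma\Upsilon<2^\sigma\Lambda\Upsilon$ forced by $\rho^\alpha\, 2^\sigma\Lambda\Upsilon<1$. Your write-up is in fact a touch cleaner than the paper's in spelling out exactly where the strictness enters.
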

\begin{proof}

We use the method of continuity.
Set
\[
 I := \{0\leq r\leq\rho: \quad f(r) \leq 2^\sigma \Lambda\, \Upsilon\}.
\]
Clearly $0  \in I$ since $f(0) = 0$.
If $r_k \in I$, $r = \lim_{k \to \infty} r_k$ then by continuity $f(r) = \lim_{k \to \infty} f(r_k) \leq 2^\sigma \Lambda \Upsilon$.
Lastly, if $r \in I$,
then we have
\[
\begin{split}
  f(r) \leq \Lambda (1+r^\alpha f(r))^\sigma \Upsilon \leq \Lambda (1+r^\alpha 2^\sigma \Lambda \, \Upsilon)^\sigma \Upsilon \\
  \end{split},
  \]
and since $r\leq \rho$ we find
\[
 f(r) \leq\Lambda \brac{1+1}^\sigma \Upsilon < 2^\sigma \Lambda \Upsilon.
\]
Thus, by continuity we have $f(\tilde{r}) < 2\Lambda \Upsilon$ for all $\tilde{r} \approx r$.

\end{proof}

\begin{lemma}[Morrey-Sobolev embedding]\label{la:morreysob}
Assume $s > \frac{d}{p}$. Then there exists $C=C(s,p,d)$ such that,
\begin{align}\begin{split}\nonumber
 &[Df]_{C^{s-\frac{d}{p}}(B(0,\tfrac 34 \rho))}
 \leq C \brac{\int_{B(0,\rho)}\brac{ \int_{B(0,\rho)} \frac{\abs{f(x)-f(y)-Df(y) (x-y)}}{|x-y|^{d+1+s}}\, dy}^p\, dx}^{\frac{1}{p}}.
 \end{split}
\end{align}

\end{lemma}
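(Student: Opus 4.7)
The right-hand side is essentially a Triebel--Lizorkin-type seminorm of $f$ of order $1+s$ (roughly $\|f\|_{F^{1+s}_{p,1}}$), and the claim is the corresponding Morrey embedding $F^{1+s}_{p,1}\hookrightarrow C^{1,\,s-d/p}$ localized on $B(0,\rho)$. My plan is to prove it directly via a three-point identity for the Taylor remainder $T(a,b) := f(a)-f(b)-Df(b)(a-b)$ combined with a Campanato/Morrey-type absorption.

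\emph{Pointwise oscillation bound.} A direct computation yields the three-point identity
\[(Df(v)-Df(w))(u-v) = T(u,w) - T(v,w) - T(u,v) \qquad \forall\, u,v,w.\]
Fix $x_1,x_2 \in B(0,\tfrac{3}{4}\rho)$ with $r := |x_1-x_2|$ small (large scales are handled by chaining), and let $e$ be a unit vector along $Df(x_1)-Df(x_2)$. Setting $B^\pm := B(x_1 \pm 2re,\,r/10) \subset B(0,\rho)$ and using the ``dumbbell'' test function $\psi := |B^+|^{-1}\chi_{B^+} - |B^-|^{-1}\chi_{B^-}$, one has $\int\psi = 0$ and $\int \psi(u)(u-x_1)\,du = 4re$. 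Integrating the identity (with $v = x_1$, $w = x_2$) against $\psi$ therefore cancels the $T(x_1,x_2)$-term and gives
\[|Df(x_1)-Df(x_2)|\,r \aleq \sum_\pm \frac{1}{|B^\pm|}\int_{B^\pm} \bigl[|T(u,x_1)| + |T(u,x_2)|\bigr]\,du.\]
Since $|u-x_i| \sim r$ on $B^\pm$, bounding each integrand by $r^{d+1+s}|T(u,x_i)|/|u-x_i|^{d+1+s}$ and integrating produces the pointwise estimate $|Df(x_1)-Df(x_2)| \aleq r^{s}\,(\tilde G(x_1)+\tilde G(x_2))$, where $\tilde G(x) := \int |T(y,x)|/|y-x|^{d+1+s}\,dy$.

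\emph{Morrey absorption.} Raising this bound to the $p$-th power, averaging $x_i$ over balls $B(y_i, \eta r)$ with $|y_1-y_2| = r$, and applying H\"older's inequality gives $\frac{1}{|B(y_i,\eta r)|}\int_{B(y_i,\eta r)} \tilde G \aleq (\eta r)^{-d/p}\|\tilde G\|_{L^p(B(0,\rho))}$. Combined with the continuity of $Df$, this produces a Campanato-type estimate at scale $r$, and a standard dyadic iteration --- choosing $\eta$ small enough that $2\eta^\beta < 1$ with $\beta := s-d/p > 0$ --- absorbs the smaller-scale H\"older terms to yield $[Df]_{C^\beta(B(0,3\rho/4))} \aleq \|\tilde G\|_{L^p(B(0,\rho))}$. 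Pairs of points at distance comparable to $\rho$ are dispatched by chaining a bounded number of these short-range estimates.

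\emph{Main obstacle.} The three-point identity naturally produces $T(u,x_i)$, paired with $\tilde G$, whereas the hypothesis bounds $G(x) := \int |T(x,y)|/|x-y|^{d+1+s}\,dy$ --- note the swapped arguments of $T$. To bridge this, use $T(y,x) = -T(x,y) + (Df(y)-Df(x))(y-x)$, which gives $\tilde G(x) \leq G(x) + \int |Df(y)-Df(x)|/|x-y|^{d+s}\,dy$. The cross term is a Riesz-type seminorm of $Df$, comparable to a fractional Sobolev seminorm; the subcritical assumption $sp>d$ is exactly what allows this cross term to be absorbed into the left-hand side by a second bootstrap of the same kind. The combination of these steps yields the desired bound $[Df]_{C^{s-d/p}(B(0,3\rho/4))} \leq C(s,p,d)\,\|G\|_{L^p(B(0,\rho))}$.
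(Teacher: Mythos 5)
Your dumbbell test-function argument is a nice, more direct way to extract the oscillation of $Df$ than the paper's route (the paper controls the symmetric second difference $f(x)+f(y)-2f\bigl(\frac{x+y}{2}\bigr)$ and then appeals to a Zygmund-type characterization of $C^{1,\beta}$), but the step where you pass from $\tilde G$ to $G$ has a genuine gap.

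After relabeling, $\|G\|_{L^p}^p$ is the $L^p_x L^1_y$ mixed norm and $\|\tilde G\|_{L^p}^p$ the $L^p_y L^1_x$ mixed norm of the same nonnegative kernel $|T(x,y)|\,|x-y|^{-d-1-s}$; there is no general comparison between the two. Your proposed bridge $\tilde G(x) \leq G(x) + \int |Df(y)-Df(x)|\,|x-y|^{-d-s}\,dy$ cannot be closed by absorbing the cross term into $[Df]_{C^{\beta}}$ with $\beta = s - d/p$: if $Df$ is merely $C^\beta$, then near the diagonal the cross-term integrand is of order $|x-y|^{\beta - d - s} = |x-y|^{-d - d/p}$, and $\int_{|z|<\delta}|z|^{-d-d/p}\,dz = +\infty$, so the cross term is not controlled by the H\"older seminorm you want to absorb it into; the subcritical hypothesis $sp>d$ makes the exponent strictly worse than $-d$, not better. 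Keeping the sharper form of the cross term, with the radial projection $(Df(y)-Df(x))\cdot(y-x)$ rather than the full gradient difference, only recovers $\mathrm{cross}(x) \leq G(x) + \tilde G(x)$ by the same triangle-inequality split, which is circular.

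The paper never meets this asymmetry because it applies H\"older's inequality \emph{before} symmetrizing: for each ball $B(x_0,r)$ it bounds the $L^1$ double integral $\iint_{(B(x_0,r)\cap B)^2} |T(x,y)|\,|x-y|^{-d-1-s} \lesssim r^{d(1-1/p)}\Lambda$, and at the level of a symmetric double integral Fubini (relabeling $x\leftrightarrow y$) makes $T(x,y)$ and $T(y,x)$ interchangeable for free. Your proof can be repaired by the same device: instead of raising the pointwise dumbbell estimate to the $p$-th power, integrate it in $L^1$ in $x_1,x_2$ over $B(x_0,r)$, observe that all the dumbbell supports lie in $B(x_0,Cr)$, symmetrize the resulting double integral of $|T(u,x_i)|\,|u-x_i|^{-d-1-s}$ via Fubini to swap the slots of $T$, and only then apply H\"older against $\|G\|_{L^p(B(0,\rho))}$. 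This yields the same Campanato decay $r^{s-d/p}$ at every admissible scale, and your dyadic iteration and chaining step then conclude as intended.
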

\begin{proof}
By scaling we may assume that $\rho = 1$.
Set
\[
 \Lambda := \brac{\int_{B(0,1)}\brac{ \int_{B(0,1)} \frac{\abs{f(x)-f(y)-Df(y) (x-y)}}{|x-y|^{d+1+s}}\, dy}^p\, dx}^{\frac{1}{p}}.
\]
By H\"older's inequality we observe that for any ball $B(x_0,r)$  we have
\[
\begin{split}
 \int_{B(x_0,r) \cap B(0,1)}\int_{B(x_0,r) \cap B(0,1)} \frac{\abs{f(x)-f(y)-Df(y) (x-y)}}{|x-y|^{d+1+s}}\, dy\, dx\\
 \leq |B(x_0,r) \cap B(0,1)|^{1-\frac{1}{p}} \Lambda \aleq_{d} r^{d(1-\frac{1}{p})}\, \Lambda.
\end{split}
 \]
There holds
\[|(D f(x)-D f(y))(x-y)| \leq 
|f(x)-f(y) -D f(y)(x-y)|+
|f(y)-f(x)-D f(x)(y-x)|
\]
 and hence by Fubini's theorem,
\[
 \int_{B(x_0,r) \cap B(0,1)}\int_{B(x_0,r)\cap B(0,1)} \frac{\abs{(D f(x)-D f(y)) (x-y)}}{|x-y|^{d+s+1}}\, dy\, dx \aleq_{d} r^{d(1-\frac{1}{p})}\, \Lambda.
\]
Thus
\[
 \int_{B(x_0,r) \cap B(0,1)}\int_{B(x_0,r) \cap B(0,1)} \abs{(Df(x)-Df(y)) \frac{x-y}{|x-y|}}\, dy\, dx \aleq_{d} r^{d+s} r^{d(1-\frac{1}{p})}\, \Lambda.
\]
Now consider
\begin{equation}\label{eq:bloodyhoelder:423}
\begin{split}
 |f(x)+f(y)-2f(\frac{x+y}{2})|
 &\leq \frac{1}{2}\int_0^1 |\langle \nabla f(\frac{x+y}{2} + t\frac{x-y}{2})-\nabla f(\frac{x+y}{2}- t\frac{x-y}{2}), x-y\rangle|\,dt\\
 &\leq \frac{1}{2}\int_0^1 |\langle \nabla f(\frac{1+t}{2} x + \frac{1-t}{2} y)-\nabla f(\frac{1-t}{2} x + \frac{1+t}{2} y), x-y\rangle|\,dt.\\
 \end{split}
\end{equation}
Now we observe that for $t \geq \frac{1}{2}$, substituting $\tilde{x} := \frac{1+t}{2} x + \frac{1-t}{2} y$, $\tilde{y} := \frac{1-t}{2} x + \frac{1+t}{2} y$ and observing that then $dx \aeq d\tilde{x}$, $dy \aeq d\tilde{y}$ and $|x-y| \aeq |\tilde{x}-\tilde{y}|$ (all since $t \geq \frac{1}{2}$) and using the convexity of $B(x_0,r) \cap B(0,1)$
\begin{equation}\label{eq:bloodyhoelder:424}
\begin{split}
&\int_{B(x_0,r) \cap B(0,1)}\int_{B(x_0,r) \cap B(0,1)} |\langle \nabla f(\frac{1+t}{2} x + \frac{1-t}{2} y)-\nabla f(\frac{1-t}{2} x + \frac{1+t}{2} y), x-y\rangle| \, dx\, dy\\
\aleq ~&r\int_{B(x_0,r) \cap B(0,1)}\int_{B(x_0,r) \cap B(0,1)} |\langle \nabla f(\tilde{x})-\nabla f(\tilde{y}), \frac{\tilde{x}-\tilde{y}}{|\tilde{x}-\tilde{y}|}\rangle| \, d\tilde x\, d\tilde y\\
\aleq ~&\Lambda\, r^{2d+1+s-\frac{d}{p}}.
\end{split}
\end{equation}
If on the other hand $t \leq \frac{1}{2}$ we decompose
\begin{equation}\label{eq:bloodyhoelder:425}
\begin{split}
&\int_{B(x_0,r) \cap B(0,1)}\int_{B(x_0,r) \cap B(0,1)} |\langle \nabla f(\frac{1+t}{2} x + \frac{1-t}{2} y)-\nabla f(\frac{1-t}{2} x + \frac{1+t}{2} y), x-y\rangle| \, dx\, dy\\
\leq&\int_{B(x_0,r) \cap B(0,1)}\int_{B(x_0,r) \cap B(0,1)} |\langle \nabla f(\frac{1+t}{2} x + \frac{1-t}{2} y)-\nabla f(x), x-y\rangle| \, dx\, dy\\
&+\int_{B(x_0,r) \cap B(0,1)}\int_{B(x_0,r) \cap B(0,1)} |\langle \nabla f(y)-\nabla f(\frac{1-t}{2} x + \frac{1+t}{2} y), x-y\rangle| \, dx\, dy\\
&+\int_{B(x_0,r) \cap B(0,1)}\int_{B(x_0,r) \cap B(0,1)} |\langle \nabla f(x)-\nabla f(y), x-y\rangle| \, dx\, dy.\\
\end{split}
\end{equation}
In the first term on the right-hand side of \eqref{eq:bloodyhoelder:425} we substitute $\tilde{y} := \frac{1+t}{2} x + \frac{1-t}{2}y$, observe that $\tilde{y}-x$ is parallel to $x-y$ and $|\tilde{y}-x| \aeq |x-y|$ and that $d\tilde{y} \aeq dy$ to obtain
\[
 \int_{B(x_0,r) \cap B(0,1)}\int_{B(x_0,r) \cap B(0,1)} |\langle \nabla f(\frac{1+t}{2} x + \frac{1-t}{2} y)-\nabla f(x), x-y\rangle| \, dx\, dy \aleq \Lambda\, r^{2d+1+s-\frac{d}{p}}.
\]
For the second term on the right-hand side of \eqref{eq:bloodyhoelder:425} we substitute $\tilde{x} := \frac{1-t}{2} x + \frac{1+t}{2} y$, observe that $\tilde{x}-y$ is parallel to $x-y$ with comparable norm, and thus 
\[
 \int_{B(x_0,r) \cap B(0,1)}\int_{B(x_0,r) \cap B(0,1)} |\langle \nabla f(y)-\nabla f(\frac{1-t}{2} x + \frac{1+t}{2} y), x-y\rangle| \, dx\, dy\aleq \Lambda\, r^{2d+1+s-\frac{d}{p}}.
\]
Thus, \eqref{eq:bloodyhoelder:425} becomes
\begin{equation}\label{eq:bloodyhoelder:426}
\begin{split}
&\int_{B(x_0,r) \cap B(0,1)}\int_{B(x_0,r) \cap B(0,1)} |\langle \nabla f(\frac{1+t}{2} x + \frac{1-t}{2} y)-\nabla f(\frac{1-t}{2} x + \frac{1+t}{2} y), x-y\rangle| \, dx\, dy\\
\aleq~&\Lambda\, r^{2d+1+s-\frac{d}{p}}.
\end{split}
\end{equation}
Using \eqref{eq:bloodyhoelder:423}, \eqref{eq:bloodyhoelder:424} and \eqref{eq:bloodyhoelder:425} we arrive at 
\[
 \int_{B(x_0,r) \cap B(0,1)} \int_{B(x_0,r) \cap B(0,1)}  |\frac{1}{2} f(x)+\frac{1}{2} f(y)-f(\frac{x+y}{2})|\, dy\, dx\leq C\, \Lambda r^{2d+1+s-\frac{d}{p}}.
 \]

For a.e. $x_0,y_0 \in B(0,1)$ we have 
\[
\begin{split}
&|f(x_0)+f(y_0)-f(\frac{x_0+y_0}{2})|\\ =& \lim_{k \to +\infty} \mvint_{B(0,2^{-k})} \mvint_{B(0,2^{-k})}   |f(x_0+z_1)+f(y_0+z_2)-2f(\frac{x_0+z_1+y_0+z_2}{2})|\, dz_1\, dz_2
\end{split}
\]
So if we set 
\[
a_k := \mvint_{B(0,2^{-k})} \mvint_{B(0,2^{-k})}   |f(x_0+z_1)+f(y_0+z_2)-2f(\frac{x_0+z_1+y_0+z_2}{2})|\, dz_1\, dz_2
\]
we have 
\[
|f(x_0)+f(y_0)-2f(\frac{x_0+y_0}{2})| = a_K + \sum_{k=K+1}^\infty (a_{k} - a_{k-1}) \aleq \sum_{k=K}^{\infty} a_k.
\]
But (observe the mean value!) when $x_0\in B(0,1)$ and $K$ is large enough, we have
\[
a_k \aleq \Lambda (2^{-k})^{1+s-\frac{d}{p}}
\]
so 
\[
\sum_{k=K}^\infty a_k \aleq \Lambda \brac{2^{-K}}^{1+s-\frac{d}{p}}.
\]
So if we choose $2^{-K} \aeq \min\{|x_0-y_0|,\dist(x_0,\partial B(0,1)),\dist(y_0,\partial B(0,1))\}$ then we have shown 
\[
|f(x_0)+f(y_0)-2f(\frac{x_0+y_0}{2})| \aleq \Lambda |x_0-y_0|^{1+s-\frac{d}{p}},
\]
whenever $|x_0-y_0| < \dist(y_0,\partial B(0,1)), \dist(x_0,\partial B(0,1))$.

In other words we have 
\[
|f(x_0+h)+f(x_0-h)-2f(x_0)| \aleq \Lambda |h|^{1+s-\frac{d}{p}} \quad \forall |h| < \dist(x_0,\partial B).
\]
From here one can show by elementary means that $Df$ is Hoelder continuous with exponent $s-d/p$.

\end{proof}


\begin{thebibliography}{10}

\bibitem{ACFGH03}
A.~Abrams, J.~Cantarella, J.~H.~G. Fu, M.~Ghomi, and R.~Howard.
\newblock Circles minimize most knot energies.
\newblock {\em Topology}, 42(2):381--394, 2003.

\bibitem{AFN16}
I.~Agol, F.~C. Marques, and A.~Neves.
\newblock Min-max theory and the energy of links.
\newblock {\em J. Amer. Math. Soc.}, 29(2):561--578, 2016.

\bibitem{BK03}
M.~Bauer and E.~Kuwert.
\newblock Existence of minimizing {W}illmore surfaces of prescribed genus.
\newblock {\em Int. Math. Res. Not.}, 2003(10):553--576, 2003.

\bibitem{BEnergyspaces13}
S.~Blatt.
\newblock The energy spaces of the tangent point energies.
\newblock {\em J. Topol. Anal.}, 5(3):261--270, 2013.

\bibitem{BR2015}
S.~Blatt and P.~Reiter.
\newblock Regularity theory for tangent-point energies: the non-degenerate
  sub-critical case.
\newblock {\em Adv. Calc. Var.}, 8(2):93--116, 2015.

\bibitem{BRS16}
S.~Blatt, P.~Reiter, and A.~Schikorra.
\newblock Harmonic analysis meets critical knots. {C}ritical points of the
  {M}\"{o}bius energy are smooth.
\newblock {\em Trans. Amer. Math. Soc.}, 368(9):6391--6438, 2016.

\bibitem{BRS19}
S.~Blatt, P.~Reiter, and A.~Schikorra.
\newblock On {O}'{H}ara knot energies {I}: regularity for critical knots.
\newblock {\em J. Differential Geom.}, 121(3):385--424, 2022.

\bibitem{BRSV21}
S.~{Blatt}, P.~{Reiter}, A.~{Schikorra}, and N.~{Vorderobermeier}.
\newblock {Scale-invariant tangent-point energies for knots}.
\newblock {\em arXiv e-prints}, page arXiv:2104.10238, Apr. 2021.

\bibitem{Breuning12}
P.~Breuning.
\newblock Compactness of immersions with local {L}ipschitz representation.
\newblock {\em Ann. Inst. H. Poincar\'{e} C Anal. Non Lin\'{e}aire},
  29(4):545--572, 2012.

\bibitem{buck-orloff}
G.~Buck and J.~Orloff.
\newblock A simple energy function for knots.
\newblock {\em Topology Appl.}, 61(3):205--214, 1995.

\bibitem{CC19}
X.~Cabr\'{e} and M.~Cozzi.
\newblock A gradient estimate for nonlocal minimal graphs.
\newblock {\em Duke Math. J.}, 168(5):775--848, 2019.

\bibitem{CCC20}
X.~{Cabre}, M.~{Cozzi}, and G.~{Csat{\'o}}.
\newblock {A fractional Michael-Simon Sobolev inequality on convex
  hypersurfaces}.
\newblock {\em arXiv e-prints}, page arXiv:2004.13129, Apr. 2020.

\bibitem{CFSW18}
X.~Cabr\'{e}, M.~M. Fall, J.~Sol\`a-Morales, and T.~Weth.
\newblock Curves and surfaces with constant nonlocal mean curvature: meeting
  {A}lexandrov and {D}elaunay.
\newblock {\em J. Reine Angew. Math.}, 745:253--280, 2018.

\bibitem{CRS10}
L.~Caffarelli, J.-M. Roquejoffre, and O.~Savin.
\newblock Nonlocal minimal surfaces.
\newblock {\em Comm. Pure Appl. Math.}, 63(9):1111--1144, 2010.

\bibitem{CV11}
L.~Caffarelli and E.~Valdinoci.
\newblock Uniform estimates and limiting arguments for nonlocal minimal
  surfaces.
\newblock {\em Calc. Var. Partial Differential Equations}, 41(1-2):203--240,
  2011.

\bibitem{DdPW18}
J.~D\'{a}vila, M.~del Pino, and J.~Wei.
\newblock Nonlocal {$s$}-minimal surfaces and {L}awson cones.
\newblock {\em J. Differential Geom.}, 109(1):111--175, 2018.

\bibitem{FHW94}
M.~H. Freedman, Z.-X. He, and Z.~Wang.
\newblock M\"{o}bius energy of knots and unknots.
\newblock {\em Ann. of Math. (2)}, 139(1):1--50, 1994.

\bibitem{GM99}
O.~Gonzalez and J.~H. Maddocks.
\newblock Global curvature, thickness, and the ideal shapes of knots.
\newblock {\em Proc. Natl. Acad. Sci. USA}, 96(9):4769--4773, 1999.

\bibitem{KS04}
E.~Kuwert and R.~Sch\"{a}tzle.
\newblock Removability of point singularities of {W}illmore surfaces.
\newblock {\em Ann. of Math. (2)}, 160(1):315--357, 2004.

\bibitem{MN14}
F.~C. Marques and A.~Neves.
\newblock Min-max theory and the {W}illmore conjecture.
\newblock {\em Ann. of Math. (2)}, 179(2):683--782, 2014.

\bibitem{fracsurfaceMihailaSeguin}
C.~{Mihaila} and B.~{Seguin}.
\newblock {A definition of fractional k-dimensional measure: bridging the gap
  between fractional length and fractional area}.
\newblock {\em arXiv e-prints}, page arXiv:2303.11542, Mar. 2023.

\bibitem{OH91}
J.~O'Hara.
\newblock Energy of a knot.
\newblock {\em Topology}, 30(2):241--247, 1991.

\bibitem{OH92}
J.~O'Hara.
\newblock Family of energy functionals of knots.
\newblock {\em Topology Appl.}, 48(2):147--161, 1992.

\bibitem{OH94}
J.~O'Hara.
\newblock Energy functionals of knots. {II}.
\newblock {\em Topology Appl.}, 56(1):45--61, 1994.

\bibitem{JunSurface2}
J.~O'Hara.
\newblock Characterization of balls by generalized {R}iesz energy.
\newblock {\em Math. Nachr.}, 292(1):159--169, 2019.

\bibitem{JunSurface1}
J.~O'Hara and G.~Solanes.
\newblock Regularized {R}iesz energies of submanifolds.
\newblock {\em Math. Nachr.}, 291(8-9):1356--1373, 2018.

\bibitem{PPS22}
R.~{Paroni}, P.~{Podio-Guidugli}, and B.~{Seguin}.
\newblock {On a notion of nonlocal curvature tensor}.
\newblock {\em arXiv e-prints}, page arXiv:2211.00552, Nov. 2022.

\bibitem{Riv08}
T.~Rivi\`ere.
\newblock Analysis aspects of {W}illmore surfaces.
\newblock {\em Invent. Math.}, 174(1):1--45, 2008.

\bibitem{Simon1985}
L.~Simon.
\newblock Existence of {W}illmore surfaces.
\newblock In {\em Miniconference on geometry and partial differential equations
  ({C}anberra, 1985)}, volume~10 of {\em Proc. Centre Math. Anal. Austral. Nat.
  Univ.}, pages 187--216. Austral. Nat. Univ., Canberra, 1986.

\bibitem{SvdM12}
P.~Strzelecki and H.~von~der Mosel.
\newblock Tangent-point self-avoidance energies for curves.
\newblock {\em J. Knot Theory Ramifications}, 21(5):1250044, 28, 2012.

\bibitem{SvdM13Menger}
P.~Strzelecki and H.~von~der Mosel.
\newblock Menger curvature as a knot energy.
\newblock {\em Phys. Rep.}, 530(3):257--290, 2013.

\bibitem{SvdM13}
P.~Strzelecki and H.~von~der Mosel.
\newblock Tangent-point repulsive potentials for a class of non-smooth
  {$m$}-dimensional sets in {$\mathbb{R}^n$}. {P}art {I}: {S}moothing and
  self-avoidance effects.
\newblock {\em J. Geom. Anal.}, 23(3):1085--1139, 2013.

\bibitem{strvdM}
P.~Strzelecki and H.~von~der Mosel.
\newblock Geometric curvature energies: facts, trends, and open problems.
\newblock In {\em New directions in geometric and applied knot theory}, Partial
  Differ. Equ. Meas. Theory, pages 8--35. De Gruyter, Berlin, 2018.

\bibitem{W93}
T.~J. Willmore.
\newblock {\em Riemannian geometry}.
\newblock Oxford Science Publications. The Clarendon Press, Oxford University
  Press, New York, 1993.

\end{thebibliography}

\end{document}